\documentclass[12pt]{amsart}

\usepackage{amsmath}
\usepackage{amssymb}
\usepackage{graphicx}


\newcommand{\C}{{\mathbb C}}       
\newcommand{\R}{{\mathbb R}}       

\newcommand{\N}{{\mathbb N}}       %

\newcommand{\HH}{{\mathcal H}}

\newcommand{\CC}{{\mathcal C}}

\newcommand{\diam}{{\rm diam}}
\newcommand{\dist}{{\rm dist}}
\newcommand{\ra}{\rightarrow}

\newcommand{\rf}[1]{{(\ref{#1})}}

\newcommand{\supp}{{\operatorname{spt}}}

\newcommand{\vphi}{{\varphi}}
\newcommand{\ve}{{\varepsilon}}
\newcommand{\vv}{}



\newcommand{\hdg}{ \mathcal{H}^{d-1} \lfloor_\Gamma}
\newcommand{\hd}{ \mathcal{H}^{d-1} }
\newcommand{\rest}{{\lfloor}}

\newcommand{\re}{{\rm Re}}

\textwidth15cm
\textheight21cm
\evensidemargin.2cm
\oddsidemargin.2cm

\addtolength{\headheight}{5.2pt}    

\newtheorem{theorem}{Theorem}[section]
\newtheorem{lemma}[theorem]{Lemma}

\newtheorem*{lemma*}{Lemma}

\theoremstyle{definition}

\theoremstyle{remark}
\newtheorem{rem}[theorem]{\bf Remark}

\numberwithin{equation}{section}

\newcommand{\brem}{\begin{rem}}
\newcommand{\erem}{\end{rem}}


\begin{document}

\title[Strong and weak estimates for singular integrals]{Strong and weak type estimates for singular integrals with respect to measures separated by AD-regular boundaries}

\author{V. Chousionis and X. Tolsa}
\subjclass[2010]{Primary 42B20, 42B25.} 
\keywords{Calder\'{o}n-Zygmund singular integrals, uniform rectifiability}
\thanks{V.C is partially supported by the grant MTM2010-15657 (Spain) and X.T is partially supported by grants 2009SGR-000420 (Generalitat de Catalunya) and MTM2010-16232
(Spain).}

\address{Vasilis Chousionis.  Department of Mathematics, University of Illinois at Urbana-Champaign, 1409 W. Green Street, Urbana, IL 61801, U.S.A. }

\email{vchous@illinois.edu}

\address{Xavier Tolsa. Instituci\'{o} Catalana de Recerca i Estudis Avan\c{c}ats (ICREA) and Departament de Ma\-te\-m\`a\-ti\-ques, Universitat Aut\`onoma de Bar\-ce\-lo\-na.
08193 Barcelona, Catalonia} \email{xtolsa@mat.uab.cat}

\begin{abstract}
We prove weak and strong boundedness estimates for singular integrals in $\R^d$ with respect to $(d-1)$-dimensional measures separated by Ahlfors-David regular boundaries, generalizing and extending results of Chousionis and Mattila. Our proof follows a different strategy based on new  Calder\'on-Zygmund decompositions which can be also used to extend a result of David.
\end{abstract}
\maketitle

\section{Introduction}


A Radon measure on $\R^d$ has $n$-growth if
there exists some constant $c_\mu$ such that $\mu(B(x,r))\leq c_\mu r^{n}$
for all $x\in\R^d$, $r>0$. If there exists some constant $c_\mu$ such that
$$c_\mu^{-1}r^{n}\leq \mu(B(x,r)) \leq c_\mu\,r^{n}\quad\mbox{for all $x\in\supp\mu$,
$0<r\leq\diam(\supp\mu)$},$$ then we say that $\mu$ is
$n$-Ahlfors-David regular, or $n$-AD regular. A set $E \subset \R^d$ is $n$-AD regular if the $n$-dimensional Hausdorff measure restricted to $E$, denoted by $\mathcal{H}^n\lfloor_E $, is $n$-AD regular.

The space of finite complex Radon measures in $U \subset \R^d$ is denoted by $M(U)$. This is a Banach space with the norm of the total variation:
$\|\nu\|= |\nu|(U)$.

We say that
$k(\cdot,\cdot):\R^d\times\R^d\setminus\{(x,y)\in\R^d\times\R^d:x=y\}\rightarrow\C$ is
an $n$-dimensional  Calder\'{o}n-Zygmund (CZ) kernel if there exist
constants $c>0$ and $\eta$, with $0<\eta\leq1$, such that the
following inequalities hold for all $x,y\in\R^d$, $x\neq y$:
\begin{equation*}
\label{defcz}
\begin{split}
&|k(x,y)|  \leq \frac{c}{|x-y|^n}, \qquad{\mbox{and}} \\ &|k(x,y)
-k(x',y)|  + |k(y,x) -k(y,x')|\leq \frac{c\,|x-x'|^\eta}{|x-y|^{n+\eta}} \quad \mbox{ if
$|x\!-\!x'|\leq |x\!-\!y|/2$.}
\end{split}
\end{equation*}

Given a positive or complex Radon measure $\nu$ on $\R^d$ and a Calder\'on-Zygmund kernel $k$, we define
\begin{equation*}
T^k\nu(x) := \int k(x,y)\,d\nu(y), \qquad{x\in\R^d\setminus\supp\nu}.
\end{equation*}
This integral may not converge when $x\in\supp\nu$. For this reason, we
consider the following $\ve$-truncated operators $T^k_\ve$, $\ve>0$:
$$T^k_\ve\nu(x) := \int_{|x-y|>\ve} k(x,y)\,d\nu(y), \qquad{x\in\R^d}.$$

Given a fixed positive Radon measure $\mu$ on $\R^d$ and $f\in
L^1_{loc}(\mu)$, we write
$$T^k_\mu f(x) := T^k(f\,\mu)(x),\qquad x\in\R^d\setminus\supp(f\,\mu),$$
and
$$T^k_{\mu,\ve} f(x) := T^k_\ve(f\,\mu)(x).$$
If $\nu$ is a positive Radon measures as well we say that $T^k_\nu$ is bounded 
from $L^p(\nu)$ to $L^p(\mu),\ 1<p<\infty$, if the operators $T^k_{\nu,\ve}$ are bounded 
from $L^p(\nu)$ to $L^p(\mu)$ uniformly on $\ve>0$. We also say that the operators $T^k_\nu$ are bounded 
from $L^p(\nu)$ to $L^{p,\infty}(\mu)$ for $1\leq p<\infty$ if for all $f \in L^p(\nu)$ and for all 
 $\lambda>0$,
\begin{equation*}
\mu\{x \in \R^d: |T_\ve(f\nu)(x)|>\lambda\} \leq \frac{c}{\lambda^p}\|f\|^p_{L^p(\nu)},
\end{equation*}
uniformly on $\ve$.
Analogously $T^k$ is bounded from $M(U), \, U \subset \R^d,$ into $L^{1,\infty}(\mu)$ if
there exists some constant $c$ such that for all $\nu\in M(U)$
and all $\lambda>0$,
$$\mu\{x\in\R^d:\,|T^k_\ve\nu(x)|>\lambda\} \leq \frac{c\|\nu\|}\lambda$$
uniformly on $\ve>0$.

Our first result reads as follows.

\vspace{2mm}
\begin{theorem} 
\label{main}
Let $U\subset \R^d$ be a domain with $(d-1)$-AD regular boundary $\Gamma$. 
Let $\mu, \nu$ two measures with $(d-1)$-growth such that $\mu(\R^d\setminus \bar{U})=\nu(U)=0$. 
Let $k$ be a $(d-1)$-dimensional Calder\'on-Zygmund kernel such that the operator $T_{\mathcal{H}^{d-1} \lfloor_\Gamma}^k:L^2(\mathcal{H}^{d-1} \lfloor_\Gamma)\ra L^2(\mathcal{H}^{d-1} \lfloor_\Gamma)$ is bounded. Then, 
\begin{enumerate}
\item the operators $T_\nu^k:L^p(\nu) \rightarrow L^{p}(\mu)$ and $T_\mu^k:L^p(\mu) \rightarrow L^{p}(\nu)$
are bounded for all $1<p<\infty$, 
 
\item $T^k$ is bounded from $M(\R^d \setminus U)$ to $L^{1,\infty}(\mu)$ and from $M(\bar{U})$ to $L^{1,\infty}(\nu)$. In particular, 
the operators $T_\nu^k:L^1(\nu) \rightarrow L^{1,\infty}(\mu)$ and $T_\mu^k:L^1(\mu) \rightarrow L^{1,\infty}(\nu)$
are bounded.

\end{enumerate}
\end{theorem}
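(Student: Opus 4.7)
The plan is to reduce both conclusions to two core estimates and then interpolate. The first is (a) the $L^2$ boundedness of $T^k_\nu\colon L^2(\nu)\to L^2(\mu)$ and of $T^k_\mu\colon L^2(\mu)\to L^2(\nu)$; the second is (b) the weak-type $(1,1)$ statement for measure input given in item~(2). Once (a) and (b) are in hand, Marcinkiewicz interpolation yields $L^p(\nu)\to L^p(\mu)$ for $1<p\le 2$, and the range $2<p<\infty$ follows by duality: the transposed kernel $k^\ast(x,y):=k(y,x)$ is again a $(d-1)$-dimensional CZ kernel, and the hypothesised $L^2(\mathcal{H}^{d-1}\rest_\Gamma)$-boundedness passes to it by taking adjoints, so the same arguments apply with the roles of $\mu$ and $\nu$ reversed. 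Consequently (1) follows from (a) combined with (2).

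For (a) I would transfer the boundedness on $\Gamma$ to $\mu$ and $\nu$ through the separating surface. Since $\supp\mu\subset\bar U$ and $\supp\nu\subset\R^d\setminus U$, any segment joining $x\in\supp\mu\setminus\Gamma$ to $y\in\supp\nu\setminus\Gamma$ must cross $\Gamma$; fixing a measurable nearest point projection $\pi\colon\R^d\to\Gamma$ and using the $(d-1)$-AD-regularity of $\Gamma$, one obtains $|x-y|\gtrsim\dist(x,\Gamma)+\dist(y,\Gamma)+|\pi(x)-\pi(y)|$. Combined with the H{\"o}lder regularity of $k$ this lets one write $k(x,y)=k(\pi(x),\pi(y))+E(x,y)$, with $E$ absolutely summable against the $(d-1)$-growth of $\mu\otimes\nu$; the main term is controlled by the hypothesis after observing that the pushforwards $\pi_\ast(f\,\nu)$ and $\pi_\ast(g\,\mu)$ have $L^2(\mathcal{H}^{d-1}\rest_\Gamma)$-norms bounded by $\|f\|_{L^2(\nu)}$ and $\|g\|_{L^2(\mu)}$. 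A non-homogeneous $T1$-type argument applied to $\sigma=\mu+\nu+\mathcal{H}^{d-1}\rest_\Gamma$ offers an alternative route.

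The core of the proof is (b), and here I would design a Calder\'on--Zygmund decomposition adapted to the two-measure, boundary-separated setting. Given $\nu\in M(\R^d\setminus U)$ and $\lambda>0$, one selects a Vitali-type family of stopping balls $\{B_i\}$ with $|\nu|(B_i)\sim\lambda\,r(B_i)^{d-1}$ and writes $\nu=g+\sum_i b_i$, with $g$ bounded by $\lambda$ in an $L^2$ sense and each $b_i$ supported in $2B_i$. The good part is handled via (a) and Chebyshev, while the bad part reduces to showing $\sum_i\int_{\R^d\setminus 2B_i}|T^k b_i|\,d\mu\lesssim\|\nu\|$. For balls far from $\Gamma$ the classical H{\"o}rmander argument using $b_i(\R^d)=0$ and the H{\"o}lder regularity of $k$ suffices. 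The main obstacle, and where the novelty lies, is the balls that straddle $\Gamma$: there the simple vanishing $b_i(\R^d)=0$ is too weak to absorb the $\Gamma$-contribution, and one must impose a finer cancellation condition on $b_i$, of the form that $b_i$ annihilates suitable averages against $\mathcal{H}^{d-1}\rest_\Gamma$ on $cB_i\cap\Gamma$, so that the extra $\Gamma$-contribution can be reabsorbed through the hypothesised boundedness on $\mathcal{H}^{d-1}\rest_\Gamma$. Designing the stopping rule and the corresponding corrections so that this refined cancellation holds while keeping $\|g\|$ and $\sum_i\|b_i\|$ under control is the step that extends the constructions of Chousionis--Mattila and of David to the present generality.
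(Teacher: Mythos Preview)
Your overall architecture (weak-type endpoint plus interpolation, duality for $p>2$) is reasonable, but step~(a) has a genuine gap that propagates into~(b).

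The projection argument you sketch for the $L^2(\nu)\to L^2(\mu)$ bound does not work. The claim that the pushforwards $\pi_\ast(f\nu)$ lie in $L^2(\mathcal{H}^{d-1}\rest_\Gamma)$ with norm controlled by $\|f\|_{L^2(\nu)}$ is false in general: a measure $\nu$ with $(d-1)$-growth can project under the nearest-point map to something singular with respect to $\mathcal{H}^{d-1}\rest_\Gamma$. For a concrete obstruction in $\R^2$, take $\Gamma$ the $x$-axis, $U$ the upper half-plane, and $\nu$ arclength on a vertical segment in the lower half-plane; then $\pi_\ast\nu$ is a Dirac mass. The error-term estimate on $E(x,y)=k(x,y)-k(\pi(x),\pi(y))$ is likewise not absolutely integrable against $\mu\otimes\nu$ under only the growth hypothesis, and the $T1$ fallback you mention is too unspecified to carry weight. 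Since your treatment of the good part in~(b) invokes~(a), the weak-$(1,1)$ argument inherits the same gap.

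The paper avoids ever proving $L^2(\nu)\to L^2(\mu)$ directly. Instead it uses David's theorem (Theorem~\ref{davgen}), applied with the AD-regular measure $\mathcal{H}^{d-1}\rest_\Gamma$ and the growth measure $\mu$, to obtain $T^k_{\mathcal{H}^{d-1}\rest_\Gamma}:L^q(\mathcal{H}^{d-1}\rest_\Gamma)\to L^q(\mu)$ for every $1<q<\infty$. The Calder\'on--Zygmund decomposition is then designed so that the \emph{good part lives on} $\Gamma$: one writes $\nu=\kappa+\beta$ with $\kappa=\tilde g\,\mathcal{H}^{d-1}\rest_\Gamma$, $|\tilde g|\lesssim\lambda$, obtained by moving the mass of each bad ball $B_i$ onto a function $\varphi_i$ supported on $R_i\cap\Gamma$ with $\int_\Gamma\varphi_i\,d\mathcal{H}^{d-1}=\int w_i\,d\nu$ and $\sum_i|\varphi_i|\lesssim\lambda$. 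The good part is then handled via the $L^q(\mathcal{H}^{d-1}\rest_\Gamma)\to L^q(\mu)$ bound, not via any $L^2(\nu)\to L^2(\mu)$ bound. Finally, rather than interpolating between $p=1$ and $p=2$, the paper proves the weak-$(p,p)$ estimate $L^p(\nu)\to L^{p,\infty}(\mu)$ directly for every $1<p<\infty$ (Theorem~\ref{teo32}), and Marcinkiewicz between two such weak estimates yields the strong $L^p(\nu)\to L^p(\mu)$ bound. Your intuition that the balls meeting $\Gamma$ require an extra cancellation tied to $\mathcal{H}^{d-1}\rest_\Gamma$ is in the right direction; the precise mechanism is this transfer of the good part onto $\Gamma$ via the functions $\varphi_i$.
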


\vspace{2mm}

An $n$-$AD$-regular set $E$ is $n$-uniformly rectifiable if there exist $\theta, M >0$ such that for all $x \in E$ and all $r>0$ 
there exists a Lipschitz mapping $\rho$ from the ball $B_n(0,r)$ in $\R^{n}$ to $\R^d$ with $\text{Lip}(\rho) \leq M$ such that$$
\mathcal{H}^n \lfloor_E (B(x,r)\cap \rho(B_{n}(0,r)))\geq \theta r^{n}.$$

Any convolution kernel $k:\mathbb{R}^{d} \setminus \{0\}\rightarrow \mathbb{R}$
such that for all $x\in \mathbb{R}^{d}\setminus \{0\}$,
\begin{equation}
\label{ker}
k(-x)=-k(x)\; \ \text{ and }\ \;\left| \nabla^j k(x)\right| \leq c_j \left| x\right| ^{-n-j}, \ \text{ for}\ j=0,1,2,
\end{equation} defines a bounded operator in $L^2(\mathcal{H}^n \lfloor_E)$ whenever $E$ is an $n$-uniformly rectifiable set. This was originally proved by David, see e.g. \cite{dpaper} and \cite{d88}, under the additional assumption $\left| \nabla^j k(x)\right| \leq c_j \left| x\right| ^{-n-j}$ for all $j\geq 0$. A proof for all kernels satisfying (\ref{ker}) can be found in \cite{tun}. It follows that all such $(d-1)$-dimensional kernels satisfy the hypotheses of Theorem \ref{main}.
 
Relating the $L^2(\mu)$-boundedness of $T^k_\mu$ with the geometric structure of $\mu$ is a hard and largely unresolved problem. After David's result in \cite{d88}, David and Semmes proved a result that goes in the converse direction. In \cite{DS} they showed that the $L^2(\mu)$ boundedness of all operators associated with convolution, odd, $C^\infty$ away from the origin $CZ$ kernels  imply that the measure $\mu$ is $n$-uniformly rectifiable. The David-Semmes conjecture, dating from 1991, asks if the $L^2(\mu)$-boundedness of the operators associated with just one of these kernels, specifically to the $n$-dimensional Riesz kernel $x/|x|^{n+1}$, suffices to imply $n$-uniform rectifiabilty. The conjecture has been very recently resolved in \cite{NToV} in the codimension 1 case, that is for  $n=d-1$. Mattila, Melnikov and Verdera in \cite{MMV} had earlier proved the conjecture in the case of $1$-dimensional Riesz kernels. For all other dimensions and for other kernels few things are known. There are several examples of kernels whose boundedness does not imply rectifiability, see \cite{C}, \cite{Dc} and \cite{hu}. On the other hand in \cite{CMPT} the kernels $\re (z)^{2n-1} / |z|^{2n}, \, z \in \C, n \in \N,$ were considered and it was proved that the $L^2$-boundedness of the operators associated with any of these kernels implies rectifiability. By now, these are the only known examples of convolution kernels not directly related to the Riesz kernels with this property.

With the previous discussion in mind, Theorem \ref{main} elaborates that the boundedness of $T^k_\mu:L^2(\mu)\ra L^2(\nu)$ with $\mu$ and $\nu$ being separated measures as in the theorem holds much more generally than the boundedness of $T^k_\mu$ from $L^2(\mu)$ to $L^2(\mu)$. Notice that in our assumptions $\mu$ and $\nu$ can be any measures with $n$-growth as long as they are separated in a reasonably nice manner. Furthermore we consider general $n$-dimensional $CZ$-kernels requiring less smoothness than in (\ref{ker}). 
 
In \cite{CM} it was shown that for a smaller class of kernels and for $1<p<\infty$ 
the operators $T^k$ are bounded from $L^p(\nu)$ to $L^p(\mu)$ and from $L^p(\mu)$ to $L^p(\nu)$ 
whenever $\mu$ and $\nu$ have $(d-1)$-growth and they are separated by $(d-1)$-Lipschitz graphs. Theorem \ref{main} extends the admissible boundaries from Lipschitz graphs 
to  uniformly rectifiable sets and moreover it covers the endpoint weak-(1,1) case, 
which did not follow from the methods in \cite{CM} and thus it was left untreated there. 

Our proof follows an altogether different approach which makes use of new Calder\'on-Zygmund decompositions partially inspired by the techniques in \cite{tow11}. We should also remark that our proof, as well as the one in \cite{CM}, makes extended use of the following theorem of David from \cite{dpaper}.
\vspace{2mm}
\begin{theorem}
\label{davgen}
Let $\mu, \nu$ two measures with compact support such that $\mu$ is $n$-AD regular and $\nu$ has $n$-growth. Let $k$ be an $n$-dimensional Calder\'on-Zygmund kernel 
such that $T^k:L^2(\mu)\rightarrow L^2(\mu)$ is bounded.
 Then  the operators $T_\mu^k:L^p(\nu) \rightarrow L^{p}(\mu)$ and  $T_\nu^k:L^p(\mu) \rightarrow L^{p}(\nu)$
are bounded for all $1< p <\infty$, 
\end{theorem}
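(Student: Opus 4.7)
By duality with $k^*(x,y):=k(y,x)$, the strong $L^p(\nu)\to L^p(\mu)$ bound for $T^k_\nu$ is equivalent to the $L^{p'}(\mu)\to L^{p'}(\nu)$ bound for $T^{k^*}_\mu$. Since $k^*$ is also a CZ kernel and $T^{k^*}$ is $L^2(\mu)$-bounded whenever $T^k$ is, both conclusions of the theorem follow from the single statement
\[
T^k_\nu:L^p(\nu)\to L^p(\mu)\text{ is bounded for all } 1<p<\infty,
\]
which the plan is to establish by interpolating between two endpoint estimates.

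\emph{Weak-$(1,1)$ endpoint.} We first prove the stronger bound $T^k:M(\R^d)\to L^{1,\infty}(\mu)$; specializing to $\sigma=f\nu$ gives $T^k_\nu:L^1(\nu)\to L^{1,\infty}(\mu)$. Given $\sigma\in M(\R^d)$ and $\lambda>0$, perform a Calder\'on-Zygmund decomposition of $\sigma$ at level $\lambda$ using the Christ dyadic cubes on $(\supp\mu,\mu)$ supplied by AD-regularity: select maximal stopping cubes $\{Q_i\}$ on which $|\sigma|(Q_i)/\mu(Q_i)>\lambda$ and write
\[
\sigma = g\,\mu + \sum_i b_i,
\]
where $|g|\lesssim\lambda$ $\mu$-a.e.\ (so $\|g\|_{L^2(\mu)}^2\lesssim\lambda\|\sigma\|$), each $b_i$ is supported in a fixed dilate of $Q_i$, $\int b_i=0$, and $|b_i|(\R^d)\lesssim\lambda\mu(Q_i)$; the AD-regularity gives $\sum_i\mu(Q_i)\lesssim\|\sigma\|/\lambda$. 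The good part is handled by Chebyshev combined with the $L^2(\mu)$-boundedness of $T^k$. The bad parts are treated by including a fixed enlargement of $\bigcup Q_i$ in the exceptional set and using the cancellation $\int b_i=0$ together with the H\"older continuity of $k$ to obtain $\int_{(2Q_i)^c}|T^k b_i|\,d\mu\lesssim|b_i|(\R^d)$, whose sum is $\lesssim\|\sigma\|$.

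\emph{$L^\infty$-to-BMO endpoint and interpolation.} For $f\in L^\infty(\nu)$ and a ball $B$ centered on $\supp\mu$, split $f\nu = f\nu\chi_{2B}+f\nu\chi_{(2B)^c}$. The far part, after subtracting the value of $T^k_\nu(f\nu\chi_{(2B)^c})$ at the center of $B$, is pointwise bounded on $B$ by the standard smoothness calculation using $n$-growth of $\nu$. The near part is controlled by the weak-$(1,1)$ endpoint and Kolmogorov's inequality: $\nu(2B)\lesssim\mu(B)$ (from $n$-growth of $\nu$ and AD-regularity of $\mu$, with $B$ centered on $\supp\mu$) gives $(\mu(B)^{-1}\int_B|T^k_\nu(f\nu\chi_{2B})|^r\,d\mu)^{1/r}\lesssim\|f\|_\infty$ for $r\in(0,1)$, hence the BMO bound via the $L^r$-John--Nirenberg characterization. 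The Stampacchia-type interpolation theorem, applicable since $\mu$ is AD-regular and hence doubling, then delivers $T^k_\nu:L^p(\nu)\to L^p(\mu)$ for all $1<p<\infty$.

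The main obstacle is the Calder\'on-Zygmund decomposition of the foreign measure $\sigma$ in the weak-$(1,1)$ step: the stopping cubes, the good part $g\mu$ on $\supp\mu$, and the bad parts $b_i$ must simultaneously satisfy $\|g\|_{L^2(\mu)}^2\lesssim\lambda\|\sigma\|$ and $\int b_i=0$, even though $\sigma$ may be singular with respect to $\mu$. The AD-regularity of $\mu$ is essential here: it provides the doubling needed for the maximal-function bound on $\sum_i\mu(Q_i)$ and the dyadic lattice on $\supp\mu$ needed for the stopping-time construction.
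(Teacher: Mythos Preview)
Your strategy—weak-$(1,1)$ plus $L^\infty\to\mathrm{BMO}$ and interpolation—is essentially David's original route to this theorem, and it differs from what the paper does. The paper simply cites Theorem~\ref{davgen} as David's result; its own alternative (Section~4) is to establish the weak-$(p,p)$ bound $T^k_\nu:L^p(\nu)\to L^{p,\infty}(\mu)$ (and the reverse direction) directly for every $1<p<\infty$ via the Calder\'on--Zygmund decomposition of Theorem~\ref{czd3}, with balls centered on $\supp\nu$ and the good part transported onto $\supp\mu$ through auxiliary functions $\varphi_i$, and then to apply Marcinkiewicz. So the two proofs reach the same conclusion by genuinely different interpolation schemes.

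There are, however, two concrete gaps in your execution. First, in the weak-$(1,1)$ step you run the stopping time on Christ cubes $Q$ of $(\supp\mu,\mu)$ using the ratio $|\sigma|(Q)/\mu(Q)$. But these cubes partition only $\supp\mu$, so $|\sigma|(Q)$ sees nothing of $\sigma\lfloor_{\R^d\setminus\supp\mu}$, and your decomposition $\sigma=g\mu+\sum b_i$ cannot account for that part at all. This is exactly the obstacle the paper's decomposition is built to overcome: in Theorem~\ref{czd3} the balls $B_i$ are centered on $\supp\nu$, selected maximal for $\int_{B_i}|f|^p\,d\nu>c\lambda^p\mu(2B_i)$, and the good part is manufactured on $\supp\mu$ via functions $\varphi_i$ supported in $R_i\cap\supp\mu$ with $\int\varphi_i\,d\mu=\int_{B_i}w_if\,d\nu$. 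You could instead just cite the known bound $T^k:M(\R^d)\to L^{1,\infty}(\mu)$ from \cite{tow11}, as the paper does.

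Second, an abstract ``Stampacchia-type'' theorem that interpolates between $L^1(\nu)\to L^{1,\infty}(\mu)$ and $L^\infty(\nu)\to\mathrm{BMO}(\mu)$ to produce $L^p(\nu)\to L^p(\mu)$ is not a standard black box when the source and target measures differ. What does work, and what your BMO computation already almost gives, is the pointwise sharp-function estimate $M^{\sharp}_{r,\mu}(T^k_\nu f)(x)\lesssim M^\nu_R f(x)$ for $x\in\supp\mu$: run your near/far split for general $f$, use Kolmogorov plus AD-regularity for the near part and Lemma~\ref{facil} applied to $|f|\nu$ for the far part. Then Fefferman--Stein (valid since $\mu$ is doubling on its support) together with \eqref{boundmax} gives $\|T^k_\nu f\|_{L^p(\mu)}\lesssim\|M^\nu_R f\|_{L^p(\mu)}\lesssim\|f\|_{L^p(\nu)}$. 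This is David's argument, and you should present it this way rather than invoke an unnamed interpolation theorem.
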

\vspace{2mm}

Using an example based on the four-corners Cantor set we prove that there exist $1$-growth measures $\nu$ and $\mu$, $\mu$ not $AD$-regular, such that the Cauchy singular integral operator, which is associated with the Cauchy kernel $1/z$, is bounded in $L^2(\mu)$ but not from $L^2(\mu)$ to  $L^2(\nu)$.  Hence we show that Theorem \ref{davgen} fails without the  $AD$-regularity assumption on $\mu$. 

On the other hand the use of Calder\'on-Zygmund decompositions can be exploited even further as only minor modifications in the proof of Theorem \ref{main} allow us to prove the following endpoint result which, as far as we know, is new.

\vspace{2mm}
\begin{theorem}
\label{davgenw}
Under the assumptions of Theorem \ref{davgen} the operator  $T^k$ is bounded  from $M(\supp \mu)$ to  $L^{1,\infty}(\nu)$. In particular $T_\mu^k:L^1(\mu) \rightarrow L^{1,\infty}(\nu)$ is bounded.
\end{theorem}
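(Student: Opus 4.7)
The strategy is the standard weak-$(1,1)$ Calder\'on-Zygmund argument, adapted so that the decomposition is performed directly on an arbitrary finite Radon measure $\sigma\in M(\supp\mu)$ rather than on an $L^1$ function, and combined with the $L^2(\mu)\to L^2(\nu)$ boundedness furnished by Theorem \ref{davgen}. Fix $\sigma$, which by splitting into its real and imaginary, positive and negative parts we may take to be positive, and fix $\lambda>0$; the goal is to show $\nu\{x:|T^k_\ve\sigma(x)|>\lambda\}\leq C\|\sigma\|/\lambda$ uniformly in $\ve>0$.

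First I would build the CZ decomposition. Consider $M_\mu\sigma(x)=\sup_{r>0}\sigma(B(x,r))/\mu(B(x,r))$ for $x\in\supp\mu$; since $\mu$ is AD-regular and hence doubling, the standard weak $(1,1)$ bound on the homogeneous space $(\supp\mu,\mu)$ yields $\mu(\Omega_\lambda)\leq C\|\sigma\|/\lambda$, where $\Omega_\lambda=\{M_\mu\sigma>\lambda\}$. Decompose $\Omega_\lambda$ via a Whitney-type family of disjoint dyadic cubes $\{Q_i\}$ drawn from a David--Semmes lattice on $\supp\mu$, such that some controlled dilate of each $Q_i$ meets $\supp\mu\setminus\Omega_\lambda$; combined with the doubling of $\mu$ this forces $c_i:=\sigma(Q_i)/\mu(Q_i)\leq C\lambda$. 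Set
\begin{equation*}
g=\sigma\rest(\supp\mu\setminus\Omega_\lambda)+\sum_i c_i\,\chi_{Q_i}\mu,\qquad b=\sum_i b_i,\qquad b_i=\chi_{Q_i}(\sigma-c_i\mu).
\end{equation*}
Lebesgue differentiation (valid since $\mu$ is doubling) gives $\sigma\rest(\supp\mu\setminus\Omega_\lambda)=f\mu$ with $0\leq f\leq\lambda$, so $g=h\mu$ with $\|h\|_\infty\leq C\lambda$ and $\|h\|_{L^1(\mu)}\leq 2\|\sigma\|$; each $b_i$ is supported in $Q_i$, has zero total mass, and $\|b_i\|\leq 2\sigma(Q_i)$.

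For the good part, Theorem \ref{davgen} with $p=2$ supplies $T^k_\mu:L^2(\mu)\to L^2(\nu)$, so by Chebyshev
\begin{equation*}
\nu\{|T^k_\ve g|>\lambda/2\}\leq\frac{C\|g\|_{L^2(\mu)}^2}{\lambda^2}\leq\frac{C\|h\|_\infty\|h\|_{L^1(\mu)}}{\lambda^2}\leq\frac{C\|\sigma\|}\lambda.
\end{equation*}
For the bad part, let $E=\bigcup_i 2Q_i$; the $n$-growth of $\nu$ and the AD-regularity of $\mu$ give $\nu(2Q_i)\leq C\ell(Q_i)^n\leq C\mu(Q_i)$, so $\nu(E)\leq C\|\sigma\|/\lambda$. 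Off $E$, choose $y_i\in Q_i$ and exploit $b_i(\R^d)=0$ together with the H\"older condition on $k$:
\begin{equation*}
|T^k_\ve b_i(x)|\leq\int_{Q_i}|k(x,y)-k(x,y_i)|\,d|b_i|(y)\leq C\,\frac{\ell(Q_i)^\eta\,\|b_i\|}{|x-y_i|^{n+\eta}}.
\end{equation*}
Fubini together with the standard tail estimate $\int_{\R^d\setminus 2Q_i}|x-y_i|^{-n-\eta}\,d\nu(x)\leq C\ell(Q_i)^{-\eta}$, which follows from the $n$-growth of $\nu$, then yields $\int_{\R^d\setminus E}|T^k_\ve b|\,d\nu\leq C\sum_i\|b_i\|\leq C\|\sigma\|$, and therefore $\nu\{x\notin E:|T^k_\ve b(x)|>\lambda/2\}\leq C\|\sigma\|/\lambda$. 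Summing the contributions of $E$, $g$, and $b$ gives the weak $(1,1)$ bound, and the in particular statement $T^k_\mu:L^1(\mu)\to L^{1,\infty}(\nu)$ follows by taking $\sigma=f\mu$.

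The main obstacle I anticipate is the uniformity in the truncation $\ve$: when $\ve>\ell(Q_i)$ the kernel $k(x,\cdot)$ is not smooth across $Q_i$ and the H\"older step above must be justified more carefully. The standard remedy is to split the index set $\{i\}$ according to whether $\ell(Q_i)\geq\ve$ or $\ell(Q_i)<\ve$, treating the small cubes with the size bound $|k|\leq c|x-y|^{-n}$ and absorbing the loss via a Cotlar-type maximal inequality that is a consequence of the $L^2(\mu)$-boundedness of $T^k_\mu$ granted in the hypothesis. This is routine but is the one point requiring care; everything else is formal manipulation of the Calder\'on--Zygmund decomposition described above.
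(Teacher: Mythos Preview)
Your argument is correct and in fact takes a more direct route than the paper's. The paper proves Theorem \ref{davgenw} by reusing the machinery of Section \ref{sec2}: the decomposition of Theorem \ref{czd3} uses almost-disjoint Besicovitch balls $B_i$, enlarged balls $R_i=10B_i$, and inductively constructed functions $\varphi_i$ supported on $R_i$ with $\sum_i|\varphi_i|\leq c\lambda$; consequently each bad piece $\beta_i$ is supported on the larger set $R_i$, so the shell $2R_i\setminus 2B_i$ must be estimated separately via the $L^2$-bound, as in \rf{fi}. That elaborate construction is genuinely needed for Theorem \ref{main}, where the measure being decomposed is not supported on the AD-regular set and one must transport mass onto $\Gamma$. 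Here, since $\sigma$ already lives on the homogeneous space $(\supp\mu,\mu)$, your classical Whitney decomposition with plain averages $c_i=\sigma(Q_i)/\mu(Q_i)$ is enough: the bad pieces sit in $Q_i$ itself and no intermediate-shell estimate is required. Both approaches work; yours is simpler for this particular statement, while the paper's is a specialization of its general framework. One small omission: you should first reduce, as the paper does at the end of the proof of Theorem \ref{w11}, to the case where the target measure satisfies $\supp\nu\subset N(\supp\mu,\diam(\supp\mu))$, so that $\|\nu\|\lesssim\|\mu\|$; then for $\lambda\leq c\|\sigma\|/\|\mu\|$ the estimate is trivial and for larger $\lambda$ the Whitney decomposition is non-degenerate.

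Your flagged obstacle about uniformity in $\ve$ is real but the remedy is much lighter than a Cotlar-type inequality. For $x\notin 2Q_i$ split into three cases: if $\ve\leq\dist(x,Q_i)$ the truncation is inactive on $Q_i$ and your H\"older estimate applies; if $\ve\geq\dist(x,Q_i)+\diam Q_i$ then $T^k_\ve b_i(x)=0$; and in the remaining range one has $|x-y_i|\approx\ve$, so the crude size bound gives $|T^k_\ve b_i(x)|\leq C\|b_i\|/\ve^n$, while the set of such $x$ lies in $B(y_i,2\ve)$ and hence has $\nu$-measure at most $C\ve^n$. Each case contributes at most $C\|b_i\|$ to $\int_{\R^d\setminus 2Q_i}|T^k_\ve b_i|\,d\nu$, uniformly in $\ve$. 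This is exactly what the paper absorbs into the phrase ``by standard estimates'' in the proof of Theorem \ref{w11}.
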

\vspace{2mm}

Let us remark that the boundedness of the operator
$T^k:M(\supp \nu)\rightarrow L^{1,\infty}(\nu)$ also holds. This is due to the fact that the boundedness
of $T^k_\mu$ in $L^2(\mu)$ implies the boundedness from $M(\R^d)$ to $L^{1,\infty}(\mu)$ (see
\cite{tow11} or \cite[Chapter 2]{Tolsa-book}, for example).

The paper is organised as follows. In Section 2 we prove the appropriate Calder\'on-Zygmund decompositions needed for the proof of Theorem \ref{main} and in Section 3 we prove Theorem \ref{main}. The proof of Theorem \ref{davgenw} is outlined in Section 4. Finally in Section 5 we prove that the $AD$-regularity assumption is essential for the proof of Theorem \ref{davgen}.

Throughout the paper the letter $C$ stands
for some constant which may change its value at different
occurrences. The notation $A\lesssim B$ means that
there is some fixed constant $C$ such that $A\leq CB$,
with $C$ as above. Also, $A\approx B$ is equivalent to $A\lesssim B\lesssim A$.

\section{Calder\'on-Zygmund Decompositions}\label{sec2}

For any set $A\subset \R^d$ and $\ve>0$ let $N(A,\ve)=\{x \in \R^d:\dist(x,A)\leq \ve\}$. 

\begin{theorem}\label{teodescz1}
\label{czd2}
Let $U\subset \R^d$ be a domain with $(d-1)$-AD regular boundary $\Gamma$.  
Let $\mu, \nu$ two measures with $(d-1)$-growth and compact support such that $\mu(\R^d\setminus \bar{U})=\nu(U)=0$.  Suppose that $\supp\nu\subset N(\Gamma,\diam(\Gamma))$.
Then for all $f \in L^p(\nu), \ 1\leq p <\infty,$ and for all 
 $\lambda>\bigl(2^{d+1}\,\|f\|^p_{L^p(\nu)}/\|\mu\|\bigr)^{1/p}$:
\begin{itemize}
\item[(a)] There exists a family of
almost disjoint balls $\{B_i\}_i$ (that is, $\sum_i \chi_{B_i} \leq
c$) centered at $\supp \nu$, with radius not exceeding $3\,\diam(\Gamma)$, and a function $h\in L^1(\mathcal{H}^{d-1}\lfloor _\Gamma)$ such that
\begin{equation}  \label{cc1}
 \int_{B_i}|f|^p d\nu >\frac{\lambda^p}{2^{d+1}}\,\mu(2B_i),
\end{equation}
\begin{equation}  \label{cc2}
\int_{\eta B_i}|f|^p d\nu \leq
\frac{\lambda^p}{2^{d+1}}\,\mu(2\eta B_i) \quad
\mbox{for $\eta >2$,}
\end{equation}
\begin{equation}  \label{cc3}
f \nu \lfloor_{\R^d\setminus\bigcup_i
B_i} = h\,\hd \lfloor_\Gamma  \quad\mbox{with $|h|\leq c \lambda$ \; $\mathcal{H}^{d-1}$ a.e. in $\Gamma$ .}
\end{equation}

\vv
\item[(b)] For each $i$, let $R_i$ be a ball concentric
with $B_i$, with $10 r(B_i)\leq r(R_i) \leq 30 \diam(\Gamma)$ and denote
$w_i= \frac{\chi_{B_i}}{\sum_k \chi_{B_k}}$. Then,
there exists a family of functions
$\vphi_i$ with $\supp(\vphi_i)\subset R_i \cap \Gamma$ and with constant sign satisfying
\begin{equation}  \label{cc4}
\int_\Gamma \vphi_i \,d\hd = \int_{B_i} w_i f\,d\nu,
\end{equation}
\begin{equation}  \label{cc5}
\sum_i |\vphi_i| \leq c_1\,\lambda
\end{equation}
(where $c_1$ is some fixed constant), and
\begin{equation}  \label{cc6}
\|\vphi_i\|_{L^\infty(\hdg)} \,r(R_i)^{d-1}\leq
c\, \int_{B_i}|f|d \nu.
\end{equation}

\end{itemize}
\end{theorem}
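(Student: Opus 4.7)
My plan is to treat (a) by a Calder\'on-Zygmund stopping time on the pair $(\nu,\mu)$ and (b) by a careful selection construction on $\Gamma$. For each $x\in\supp\nu$ I set
\[
r(x)=\sup\Bigl\{r>0:\int_{B(x,r)}|f|^p\,d\nu>\tfrac{\lambda^p}{2^{d+1}}\,\mu(2B(x,r))\Bigr\},
\]
with $r(x)=0$ if the set is empty. The lower bound on $\lambda$ combined with $\supp\nu\subset N(\Gamma,\diam(\Gamma))$ forces $r(x)\le 3\diam(\Gamma)$: once $2B(x,r)$ contains $\supp\mu$, the right hand side equals $\lambda^p\|\mu\|/2^{d+1}>\|f\|^p_{L^p(\nu)}$. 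I extract an almost disjoint subfamily $\{B_i=B(x_i,r_i)\}$ from $\{B(x,r(x)):r(x)>0\}$ via Besicovitch's covering theorem. Property (cc1) is built into the construction, and (cc2) follows from maximality: for $\eta>2$ we have $\eta r_i>r_i$, so the stopping inequality fails at scale $\eta r_i$.

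For (cc3) I argue in two steps. Any $x\in\supp(f\nu)\setminus\Gamma$ lies in $\R^d\setminus\bar U$, so $\mu$ vanishes on a neighborhood of $x$; the stopping inequality is then trivially satisfied at small scales and $x$ falls in the stopping set. Hence $f\nu\lfloor_{\R^d\setminus\bigcup B_i}$ is supported on $\Gamma$. For $x\in\Gamma\setminus\bigcup B_i$, the failure of the stopping inequality at every scale together with the $(d-1)$-growth of $\mu$ gives $\int_{B(x,r)}|f|^p\,d\nu\le c\lambda^p r^{d-1}$; H\"older combined with the $(d-1)$-growth of $\nu$ upgrades this to $\int_{B(x,r)}|f|\,d\nu\le c\lambda r^{d-1}\approx c\lambda\,\hd\lfloor_\Gamma(B(x,r))$ by AD-regularity. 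A standard Besicovitch differentiation argument then forces $f\nu\lfloor_{\Gamma\setminus\bigcup B_i}$ to be absolutely continuous with respect to $\hd\lfloor_\Gamma$, with density $|h|\le c\lambda$.

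For part (b), the geometric fact $d(x_i,\Gamma)\le 2r_i$ is crucial: otherwise $2B(x_i,r)\cap\bar U=\emptyset$ for some $r$ slightly past $r_i$, making the stopping inequality persist beyond $r_i$ and contradicting its maximality. Consequently any concentric ball $R_i$ with $r(R_i)\ge 10r_i$ meets $\Gamma$ in a set of $\hd$-measure $\approx r(R_i)^{d-1}$. I then take $\vphi_i=\alpha_i\chi_{S_i}$ with $S_i\subset R_i\cap\Gamma$ of $\hd$-measure $\approx r_i^{d-1}$ and $\alpha_i=\hd\lfloor_\Gamma(S_i)^{-1}\int_{B_i}w_if\,d\nu$. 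Properties (cc4) and the constant-sign requirement hold by construction. For (cc6), H\"older applied with (cc2) at $\eta=3$ and the $(d-1)$-growth of $\mu,\nu$ gives $\int_{B_i}|f|\,d\nu\le c\lambda r_i^{d-1}$, whence $\|\vphi_i\|_\infty\,r(R_i)^{d-1}\le c\int_{B_i}|f|\,d\nu$.

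I expect the main obstacle to be (cc5). Although $|\alpha_i|\le c\lambda$ is immediate from the same estimate, the enlarged balls $R_i$ may pile up at a single point of $\Gamma$ across many scales, so bounded multiplicity in $\R^d$ does not transfer for free. To enforce bounded overlap of the supports $S_i$ I plan a Whitney-type selection adapted to a David-Mattila dyadic lattice on $\Gamma$: attach to each $B_i$ a dyadic cube $Q_i\subset R_i\cap\Gamma$ of side $\approx r_i$ near the projection of $x_i$ onto $\Gamma$, and take $S_i\subset Q_i$ engineered so that $\{S_i\}$ has bounded multiplicity on $\Gamma$. The pointwise bound (cc5) then follows from $|\alpha_i|\le c\lambda$ combined with this bounded multiplicity.
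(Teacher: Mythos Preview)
Your argument for part (a) is essentially correct and matches the paper's strategy, with only minor technical details left implicit (e.g., care over whether the supremum $r(x)$ is actually attained, and making the differentiation argument behind \eqref{cc3} precise). Part (b) also starts well: you correctly observe that $d(x_i,\Gamma)\lesssim r_i$, so $R_i\cap\Gamma$ carries $\hd$-measure comparable to $r(R_i)^{d-1}$, and your derivation of \eqref{cc4} and \eqref{cc6} from $\vphi_i=\alpha_i\chi_{S_i}$ is fine. The genuine gap is in your mechanism for \eqref{cc5}.

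Attaching each $B_i$ to a dyadic cube $Q_i\subset R_i\cap\Gamma$ of side $\approx r_i$ does not by itself yield bounded multiplicity of the supports $S_i$. It is true that for a \emph{fixed} cube $Q$ only boundedly many $B_i$ can satisfy $Q_i=Q$ (this follows from the almost-disjointness of $\{B_i\}$), but nothing prevents the assigned cubes from forming arbitrarily long nested chains across scales. For instance, with $\Gamma$ a hyperplane one can place disjoint balls $B_k$ of radius $r_k\approx 4^{-k}$ at height $\approx 4^{-k}$ above $\Gamma$, arranged so that each dyadic cube of $\Gamma$ is the shadow $Q_i$ of some ball; the balls at different scales sit in disjoint horizontal slabs and so form a legitimate Besicovitch family in $\R^d$. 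Since each $S_i$ must occupy a fixed fraction of $Q_i$ in order for \eqref{cc6} to hold, integrating $\sum_i\chi_{S_i}$ over any top cube already gives a constant contribution from every generation, hence an infinite integral. Thus no uniform pointwise bound is possible. The almost-disjointness of $\{B_i\}$ in $\R^d$ simply does not descend to any Whitney-type disjointness of shadows on $\Gamma$, and ``engineering $S_i\subset Q_i$'' cannot repair this.

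The paper avoids this obstruction by a different, non-geometric idea: it orders the $R_i$ by increasing radius and constructs $\vphi_k=\alpha_k\chi_{A_k}$ inductively. The key input is the $L^1$ estimate
\[
\sum_{j:\,R_{s_j}\cap R_k\ne\varnothing}\int_\Gamma|\vphi_{s_j}|\,d\hd
\;\le\; c\int_{3R_k}|f|\,d\nu
\;\le\; c_2\,\lambda\,\hd(R_k\cap\Gamma),
\]
where the first inequality uses $B_{s_j}\subset 3R_k$ (valid after the size ordering) together with the bounded overlap of $\{B_{s_j}\}$, and the second uses \eqref{cc2} plus H\"older and the $(d-1)$-growth of $\mu,\nu$. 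A Chebyshev argument then shows that the set $A_k\subset R_k\cap\Gamma$ on which the previously built sum $\sum_j|\vphi_{s_j}|$ is at most $2c_2\lambda$ still has at least half the $\hd$-measure of $R_k\cap\Gamma$; placing $\vphi_k$ on $A_k$ keeps the running pointwise sum below $(2c_2+c_3)\lambda$. This Carleson-plus-Chebyshev induction (and a weak-$*$ limiting argument when $\{B_i\}$ is infinite) is the idea your proposal is missing, and it does not reduce to any bounded-multiplicity statement for the supports.
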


\begin{proof}
{\bf(a)} 
Let
$$F=\left\{x \in \supp \nu: \text{ there exists } B_x, \text{centered at}\ x \text{ such that }\int_{B_x}|f|^p d \nu>\frac{\lambda^p}{2^{d+1}} \mu(2 B_x)\right\},$$
and $G=\supp \nu \setminus F$. Notice that $\supp \nu \setminus \Gamma \subset F$ and $G \subset \Gamma$.

For all $x \in F$ let $B_x$ be a maximal ball centered in $x$ in the sense that
$$\int_{B_x}|f|^p d \nu >\frac{\lambda^p}{2^{d+1}}\,\mu(2B_x),$$
but for all concentric balls $D_x$ with $r(D_x) > 2 r(B_x)$
$$\int_{D_x}|f|^p d \nu \leq\frac{\lambda^p}{2^{d+1}}\,\mu(2D_x).$$
Notice that this maximal ball exists. Indeed, if $B_x'$ is centered at $x$, contains 
$\supp \mu \cup \supp \nu$, and satisfies \eqref{cc1}, we have
$$\int |f|^p d \nu=\int_{B_x'}|f|^pd \nu>\frac{\lambda^p}{2^{d+1}}\mu(2B_x')=\frac{\lambda^p}{2^{d+1}}\|\mu\|$$
which contradicts the initial assumption. Notice also that since $\supp \nu \subset N(\Gamma,\diam(\Gamma))$ all the maximal balls $B_x$ satisfy $r(B_x) \leq 3 \diam(\Gamma)$. 

Applying Besicovitch's covering theorem we get an almost disjoint subfamily of balls $\{B_i\}_i \subset \{B_x\}_x$ which covers $F$ and satisfy \rf{cc1} and \rf{cc2}
by construction.

Let $\tau=|f|^p \nu$. Recall that, given $\alpha>1$ and $\beta >\alpha^n$, a ball $B(x,r)$ is called $\tau$-$(\alpha,\beta)$-doubling  if $\tau(\alpha B(x,r)) \leq \beta \tau(B(x,r))$. Denote by $D$ the set of points $z \in \supp \tau$ such that there exists a sequence of $\tau$-$(2,2^{d+1})$-doubling balls $P^z_k$ centered at $z$ such that $r(P^z_k)\ra 0$. By standard arguments it follows that $\tau(G \cap D)=\tau(D)$. Therefore for $\tau$-a.e. $z \in G$, there exists a sequence of 
$(2,2^{d+1})$-$\tau$-doubling balls $P_k$ centered at $z$, with $r(P_k)\to0$, such that
$$\tau(P_k) \leq\frac{\lambda^p}{2^{d+1}}\,\mu(2P_k),$$
and thus
$$\tau(2P_k)\leq 2^{d+1}\tau(P_k) \leq\lambda^p\,\mu(2P_k).$$
This implies that $\tau \lfloor_G$ is absolutely continuous with respect to
$\mu$ and that $\tau \lfloor_G= h_1\mu$ with $|h_1|\leq\lambda^p$ $\mu$-a.e., by
the Lebesgue-Radon-Nikodym theorem (see \cite[p. 36-39]{mb}, for instance). 

Notice that  if $A\subset U$ then $h_1 \mu(A)=\tau(A\cap G)=0$ and if $A \subset \R^d \setminus \bar{U}$ then $\tau(G \cap A)=h_1\mu(A)=0$. Therefore
\begin{equation}
\label{nuhau}
\tau \lfloor_G= h_1 \mu \lfloor_\Gamma\ \;\text{ with } \ \;0\leq h_1\leq \lambda^p\ \;\mu-\text{a.e. in }\Gamma.
\end{equation}

Since $\mu \lfloor_\Gamma$ is supported on $\Gamma$ and it has $(d-1)$-growth by standard differentiation theory of measures, see e.g.\cite{mb},
it is absolutely continuous
with respect to $\hdg$ with bounded
Radon-Nikodym derivative. In other words, there exists a Borel function $h_2$
such that  
\begin{equation}
\label{m1hau}
\mu \lfloor_ \Gamma=h_2 \hdg\ \;\text{ and } \ \;0\leq h_2\leq c \ \; \hdg-\text{a.e.}.
\end{equation}
By (\ref{nuhau}) and (\ref{m1hau}) we deduce that
\begin{equation}
\label{nuhaua}
\tau \lfloor_G= h_3\hdg
\end{equation}
where $h_3=h_1\,h_2$ and $|h_3| \leq c \lambda^p$,  $\mathcal{H}^{d-1}$-a.e. in $\Gamma$. 

Now for any ball $B$ centered in $G$, using H\"older's inequality and (\ref{nuhaua}),
\begin{equation*}
\begin{split}
f \nu \lfloor_{ G} (B)&=\int_{B \cap G}f d\nu \leq \left( \int_{B \cap G}  |f|^p d \nu\right)^{1/p} \nu(B)^{1/p'}\\
&\lesssim \tau(B\cap G)^{1/p} r(B)^{\frac{d-1}{p'}}\\
&\lesssim \left(\int_{B \cap G} h_3 d \hd \right)^{1/p} \hd (\Gamma \cap B)^{1/p'}\\
&\lesssim \left(\lambda^p \hd (\Gamma \cap B) \right)^{1/p} \hd (\Gamma \cap B)^{1/p'} =\lambda \hd (\Gamma \cap B).
\end{split}
\end{equation*}
Therefore $\nu \lfloor_G$ is absolutely continuous with respect to $\hdg$ and
\begin{equation}
\label{nuhaua2}
f \nu \lfloor_G= h\,\hdg,
\end{equation}
where $0\leq h \leq c \lambda$,  $\mu$-a.e..

\vspace{3mm}
\noindent{\bf(b)}  Assume first that the family of balls $\{B_i\}_i$ is finite.
Then we may suppose that this family is ordered in such a way
that the sizes of the balls $R_i$ are non decreasing (i.e. $\ell(R_{i+1})\geq
\ell(R_i)$).
The functions $\vphi_i$ that we will construct will be of the form $\vphi_i
=\alpha_i\,\chi_{A_i}$, with $\alpha_i\in\R$ and $A_i\subset R_i$.
We set $A_1=R_1$ and
$\vphi_1 = \alpha_1\,\chi_{R_1},$
where the constant $\alpha_1$ is chosen so that $\int_{B_1}w_1 f\,d\nu=\int_\Gamma
\vphi_1\,d\hd$.

Suppose that $\vphi_1,\ldots,\vphi_{k-1}$ have been constructed,
satisfy \rf{cc4} and $$\sum_{i=1}^{k-1} |\vphi_i|\leq
c_1\,\lambda,$$  where $c_1$ is some constant which will be fixed below.
 Let $R_{s_1},\ldots,R_{s_m}$ be the subfamily of
$R_1,\ldots,R_{k-1}$ such that $R_{s_j}\cap R_k \neq \varnothing$. As $\ell(R_{s_j}) \leq \ell(R_k)$ (because of the non decreasing sizes of $R_i$),
we have $R_{s_j} \subset 3R_k$. 

Since the $B_i$'s are maximal we have that
$$\frac{1}{\mu(6 B_i)} \int_{3B_i} |f|^p d \nu\leq \frac{\lambda^p}{2^{d+1}}.$$
Hence $\supp \mu \cap 6B_i \neq \emptyset$ and thus $\Gamma \cap 6B_i \neq \emptyset$ as well. Choosing any $z_i \in \Gamma \cap 6B_i$, $B(z_i,d_i) \subset R_i$ for $d_i=r(R_i)-6r(B_i)$. Since $r(R_i)-\frac{6}{10}r(R_i)\leq d_i \leq 30 \diam(\Gamma)$ and $\Gamma$ is $(d-1)$-AD-regular we deduce that
\begin{equation}
\label{adre}
\hd(\Gamma \cap R_i) \geq \hd(\Gamma \cap B(z_i,d_i)) \geq C r(R_i).
\end{equation} 
Now taking into account that for $i=1,\ldots,k-1$, by \rf{cc4},
$$\int_\Gamma |\vphi_i|\,d\hd \leq \int_{B_i}|f| d \nu,$$
and using the finite ovelarpping of the balls $B_{s_j}$, H\"older's inequality, the $(d-1)$-growth of $\mu$ and $\nu$ and (\ref{adre}), it follows that
\begin{equation*}
\begin{split}
\sum_j \int_\Gamma |\vphi_{s_j}|\,d\hd &\leq  \sum_j
\int_{B_{s_j}}|f| d \nu \lesssim \int_{3R_k} |f| d \nu \\
&\leq \left( \int_{3 R_k} |f|^p d \nu\right)^{1/p} \nu (3 R_k)^{1/p'}\\
&\leq c\lambda \mu(6R_k)^{1/p}\nu (3 R_k)^{1/p'}\\
&\leq c \lambda r(R_k)^{d-1}\leq \,c_2\lambda\,\hd(R_k \cap \Gamma).
\end{split}
\end{equation*}
Therefore, by Chebyshev,
$$\hd\left\{\Gamma \cap \{{\textstyle \sum_j} |\vphi_{s_j}| > 2c_2\lambda\}\right\}\leq \frac{1}{2 c_2 \lambda}\int_{\Gamma}{\textstyle \sum_j} |\vphi_{s_j}| d \hd \leq \frac{\hd(R_k \cap \Gamma)}{2}.$$ Setting
$$A_k = \Gamma \cap R_k\cap\left\{{\textstyle \sum_j} |\vphi_{s_j}| \leq
2c_2\lambda\right\},$$
we have $\hd(A_k)\geq \hd(R_k\cap \Gamma)/2.$

The constant $\alpha_k$ is chosen so that for $\vphi_k = \alpha_k\, \chi_{A_k}$
we have $\int_\Gamma \vphi_k\,d\hd = \int_{B_k} w_k f\,
d\nu$.  Then, using also (\ref{adre}), we obtain
\begin{equation}
\begin{split}
\label{akest} 
|\alpha_k|  &\leq  \frac{\int_{B_k}|f|d \nu}{\hd(A_k)}
\leq \frac{2\int_{\frac{1}{2}R_k}|f|d \nu}{\hd(R_k \cap \Gamma)}\\
&\leq c \,\frac{\left(\int_{\frac{1}{2}R_k} |f|^p d \nu \right)^{1/p} \nu(R_k)^{1/p'}}{r(R_k)^{d-1}} \lesssim \lambda \,\frac{\mu(R_k)^{1/p}\nu(R_k)^{1/p'}}{r(R_k)^{d-1}} \leq c_3\lambda.
\end{split}
\end{equation}
Thus in $A_k$,
$$|\vphi_k|+\sum_{j} |\vphi_{j}|=|\vphi_k|+\sum_{j} |\vphi_{s_j}| \leq (2c_2+c_3)\lambda.$$
Since $\sum_{j=1}^{k-1} |\vphi_{j}(x)| \leq c_1 \lambda$ for $x \notin A_k$ by the previous steps of the induction, if we choose $c_1=2c_2+c_3$, \rf{cc5} follows.

Now it is easy to check that \rf{cc6} also holds. Indeed we have
\begin{align*}
\|\vphi_i\|_{L^\infty(\hdg)}\, r(R_i)^{d-1} &
 \leq c\,|\alpha_i|\,\hd( \Gamma \cap R_i)\leq c\,|\alpha_i|\,\hd( \Gamma \cap A_i)\\ 
& =  c\,\left|\int_{B_i}w_i f\,d\nu\right| 
\, \leq \, c\, \int_{B_i}|f|d\nu.
\end{align*}

Suppose now that the collection of balls $\{B_i\}_i$ is not finite.
For each fixed $N$ we consider the family of balls $\{B_i\}_{1\leq i \leq N}$.
Then, as above, we construct functions $\vphi_1^N,\ldots,\vphi_N^N$ with
$\supp(\vphi_i^N)\subset R_i$ satisfying
$$\int_\Gamma \vphi_i^N \,d\hd = \int_{B_i} w_i f\,d\nu,$$
$$\sum_{i=1}^N |\vphi_i^N| \leq B\,\lambda,
$$
and
$$\|\vphi_i^N\|_{L^\infty(\hdg)}\, r(R_i)^{d-1} \leq c\, \int_{B_i}|f|d\nu.$$
Notice that the sign of $\vphi_i^N$ equals the sign of $\int w_i f\,d\nu$ and
so it does not depend on $N$.

Then there is a subsequence
$\{\vphi_1^k\}_{k\in I_1}$ which is convergent in the weak $\ast$ topology of
$L^\infty(\hdg)$ to some function
$\vphi_1\in L^\infty(\hdg)$. Now we can consider a subsequence
$\{\vphi_2^k\}_{k\in I_2}$ with $I_2\subset I_1$ which
is also convergent in the weak $\ast$ topology of $L^\infty(\hdg)$ to some 
function $\vphi_2\in L^\infty(\hdg)$.
In general, for each $j$ we consider a subsequence
$\{\vphi_j^k\}_{k\in I_j}$ with $I_j\subset I_{j-1}$ that converges
in the weak $\ast$ topology of $L^\infty(\hdg)$ to some function
$\vphi_j\in L^\infty(\hdg)$. It is easily checked that the functions
$\vphi_j$ satisfy the required properties.
\end{proof}

For a domain $U$, $\Gamma$ and $\mu$ as in Theorem \ref{czd2}, and a complex measure
 $\nu \in M(\R^d \setminus U)$ we have the following result analogous to the preceding one.

\begin{theorem}
\label{czd1}Let $U\subset \R^d$ be a domain with $(d-1)$-AD regular boundary $\Gamma$. 
Let $\mu$ be a measure with $(d-1)$-growth and compact support such that $\mu(\R^d\setminus \bar{U})=0$. 
Then for all $\nu \in M(\R^d \setminus U)$ such that $\supp \nu \subset N(\Gamma,\diam(\Gamma))$ and for all 
 $\lambda>2^{d+1}\,\|\nu\|/\|\mu\|$:
\begin{itemize}
\item[(a)] There exists a family of
almost disjoint balls $\{B_i\}_i$ (that is, $\sum_i \chi_{B_i} \leq
c$) centered at $\supp \nu$,  with radius not exceeding $3\,\diam(\Gamma)$,  and a function $g\in L^1(\mathcal{H}^{d-1}\lfloor _\Gamma)$ such that
\begin{equation}  \label{cc1'}
 |\nu|(B_i) >\frac{\lambda}{2^{d+1}}\,\mu(2B_i),
\end{equation}
\begin{equation}  \label{cc2'}
 |\nu|(\eta B_i) \leq
\frac{\lambda}{2^{d+1}}\,\mu(2\eta B_i) \quad
\mbox{for $\eta >2$,}
\end{equation}
\begin{equation}  \label{cc3'}
\nu \lfloor_\Gamma = g\,\hd \lfloor_{\Gamma\setminus\bigcup_i
B_i}  \quad\mbox{with $|g|\leq c \lambda$ \; $\mathcal{H}^{d-1}$ a.e. in $\Gamma$ .}
\end{equation}

\vv
\item[(b)] For each $i$, let $R_i$ be a ball concentric
with $B_i$, with $10 r(B_i)\leq r(R_i) \leq 30 \diam(\Gamma)$ and denote
$w_i= \frac{\chi_{B_i}}{\sum_k \chi_{B_k}}$. Then,
there exists a family of functions
$\vphi_i$ with $\supp(\vphi_i)\subset R_i \cap \Gamma$ and with constant sign satisfying
\begin{equation}  \label{cc4'}
\int_\Gamma \vphi_i \,d\hd = \int_{B_i} w_i\,d\nu,
\end{equation}
\begin{equation}  \label{cc5'}
\sum_i |\vphi_i| \leq c_1\,\lambda
\end{equation}
(where $c_1$ is some fixed constant), and
\begin{equation}  \label{cc6'}
\|\vphi_i\|_{L^\infty(\hdg)} \,r(R_i)^{d-1}\leq
c\, |\nu|(B_i).
\end{equation}

\end{itemize}
\end{theorem}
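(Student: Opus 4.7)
The plan is to mimic the proof of Theorem~\ref{czd2} almost verbatim, substituting $|\nu|(B)$ for $\int_B |f|^p\,d\nu$ throughout, since we now handle a complex measure directly rather than $f\,\nu$. The argument actually simplifies, because no H\"older inequality is needed to pass from $|f|^p\nu$ back to $f\nu$.

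For part (a), I would first set
$$F = \{x\in \supp\nu : \exists\, B_x\ni x \text{ with } |\nu|(B_x) > (\lambda/2^{d+1})\,\mu(2B_x)\}$$
and $G=\supp\nu\setminus F$. Since $\supp\nu\setminus\Gamma\subset\R^d\setminus\bar U$ and $\mu(\R^d\setminus\bar U)=0$, any sufficiently small ball $B_x$ centered at $x\in\supp\nu\setminus\Gamma$ has $\mu(2B_x)=0<|\nu|(B_x)$, so $G\subset\Gamma$. The hypothesis $\lambda>2^{d+1}\|\nu\|/\|\mu\|$ guarantees the existence of maximal balls for points of $F$, since a ball swallowing both supports would violate \rf{cc1'}; Besicovitch then yields an almost-disjoint family $\{B_i\}_i$ covering $F$ and satisfying \rf{cc1'}--\rf{cc2'}. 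The radius bound $r(B_i)\leq 3\diam(\Gamma)$ follows from $\supp\nu\subset N(\Gamma,\diam(\Gamma))$. For the good set $G$, I would apply the $(2,2^{d+1})$-doubling differentiation argument to $\tau=|\nu|$: $\tau$-a.e.\ $z\in G$ admits a sequence of $\tau$-doubling balls $P_k\ni z$ with $\tau(2P_k)\leq \lambda\,\mu(2P_k)$, hence by Lebesgue--Radon--Nikodym $\tau\lfloor_G = h_1\,\mu$ with $|h_1|\leq\lambda$. Since $G\subset\Gamma$ and $\mu\lfloor_\Gamma=h_2\,\hdg$ with $|h_2|\leq c$ by AD-regularity, and since the polar decomposition gives $d\nu=\omega\,d|\nu|$ with $|\omega|=1$, one obtains $\nu\lfloor_{\R^d\setminus\bigcup_i B_i}=g\,\hdg$ with $|g|\leq c\lambda$, which is \rf{cc3'}.

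For part (b), I would order $\{B_i\}_i$ so that $r(R_i)$ is nondecreasing and construct $\varphi_k=\alpha_k\,\chi_{A_k}$ inductively in the same pattern as Theorem~\ref{czd2}. The crucial estimate
$$\sum_j \int_\Gamma |\varphi_{s_j}|\,d\hd \leq \sum_j |\nu|(B_{s_j})\leq c\,|\nu|(3R_k)\leq c_2\,\lambda\,\hd(R_k\cap\Gamma)$$
is obtained, in the absence of an a priori $(d-1)$-growth of $\nu$, by writing $3R_k=\eta B_k$ with $\eta=3r(R_k)/r(B_k)\geq 30$, so that \rf{cc2'} combined with the $(d-1)$-growth of $\mu$ gives $|\nu|(3R_k)\leq (\lambda/2^{d+1})\mu(2\eta B_k)\lesssim \lambda\, r(R_k)^{d-1}$; AD-regularity of $\Gamma$ then produces $r(R_k)^{d-1}\lesssim \hd(R_k\cap\Gamma)$. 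Chebyshev yields $A_k\subset R_k\cap\Gamma\cap\{\sum_j |\varphi_{s_j}|\leq 2c_2\lambda\}$ with $\hd(A_k)\geq \hd(R_k\cap\Gamma)/2$. Normalizing $\alpha_k$ to enforce \rf{cc4'} and using \rf{cc2'} with $\eta=3$ to bound $|\nu|(B_k)\leq |\nu|(3B_k)\leq c\lambda\, r(B_k)^{d-1}$ gives $|\alpha_k|\lesssim \lambda\, r(B_k)^{d-1}/r(R_k)^{d-1}\leq c\lambda$, establishing \rf{cc5'} and \rf{cc6'}. If $\{B_i\}_i$ is infinite, the diagonal weak-$\ast$ compactness argument in $L^\infty(\hdg)$ is applied verbatim.

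The principal technical point, as in Theorem~\ref{czd2}, is obtaining the growth bound $|\nu|(3R_k)\lesssim \lambda\, r(R_k)^{d-1}$; here the hypothesis of $(d-1)$-growth on $\nu$ used in Theorem~\ref{czd2} is replaced by a consequence of maximality via \rf{cc2'} applied with a large $\eta$. This is the only genuinely new ingredient compared with the preceding proof.
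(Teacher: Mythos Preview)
Your proposal is correct and is in substance the paper's own argument: the paper simply says that Theorem~\ref{czd1} follows from Theorem~\ref{czd2} by setting $p=1$, writing $\nu=f\,|\nu|$ via polar decomposition, and replacing the positive measure $\nu$ there by $|\nu|$. What you have done is unpack that one-line reduction and run through the proof of Theorem~\ref{czd2} again in the $p=1$ case.

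One remark on your closing comment. You flag as ``the only genuinely new ingredient'' the use of \eqref{cc2'} with a large $\eta$ to obtain $|\nu|(3R_k)\lesssim \lambda\,r(R_k)^{d-1}$, in lieu of the $(d-1)$-growth hypothesis on $\nu$ assumed in Theorem~\ref{czd2}. This is a valid observation---the growth hypothesis on $\nu$ is indeed absent from Theorem~\ref{czd1}---but it is not really new: if you inspect the proof of Theorem~\ref{czd2}(b), the H\"older step $\int_{3R_k}|f|\,d\nu\le(\int_{3R_k}|f|^p\,d\nu)^{1/p}\nu(3R_k)^{1/p'}$ degenerates to an equality when $p=1$, and the subsequent bound $(\int_{3R_k}|f|^p\,d\nu)^{1/p}\le c\lambda\,\mu(6R_k)^{1/p}$ comes precisely from \eqref{cc2} with $\eta=3r(R_k)/r(B_k)$. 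So for $p=1$ the proof of Theorem~\ref{czd2} already uses only the growth of $\mu$, never that of $\nu$; your argument and the paper's are the same once one notices this.
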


This result can be derived from Theorem \ref{teodescz1} by setting $p=1$, taking $f$ such that
$\nu=f\,|\nu|$, and replacing the measure $\nu$ there by $|\nu|$.

\section{Weak ($p,p$) boundedness}

We will split the proof of Theorem \ref{main} into two parts. We present first the proof of the boundedness of $T^k$ from the space of measures  $M(\R^d \setminus U)$ into
$L^{1,\infty}(\mu)$. Later we will show that $T_\nu^k$ is bounded from $L^p(\nu)$ to $L^{p,\infty}(\mu)$
for $p>1$, by  similar (although more and technical) arguments. By Theorem \ref{davgen} and interpolation it then follows that $T_
\nu^k$ is bounded from $L^p(\nu)$ to $L^{p}(\mu)$.

\vspace{2mm}
\begin{theorem}
\label{w11}
Let $U\subset \R^d$ be a domain with $(d-1)$-AD regular boundary $\Gamma$. 
 Let $\mu$ be a measure with $(d-1)$-growth such that $\mu(\R^d\setminus \bar{U})=0$.
Let $k$ be a $(d-1)$-dimensional Calder\'on-Zygmund kernel such that the operator $T_{\mathcal{H}^{d-1} \lfloor_\Gamma}^k:L^2(\mathcal{H}^{d-1} \lfloor_\Gamma)\ra L^2(\mu)$ is bounded.
Then the operator $T^k$ is bounded from $M(\R^d \setminus U)$ into
$L^{1,\infty}(\mu)$. That is for all $\nu \in M(\R^d \setminus U)$ and for all 
 $\lambda>0$,
\begin{equation}
\label{eqdebil49}
\mu(\{x \in \R^d: |T_\ve^k\nu(x)|>\lambda\}) \leq \frac{c}{\lambda}\|\nu\|,
\end{equation}
uniformly on $\ve$.
\end{theorem}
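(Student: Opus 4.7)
The plan is to run a Calder\'on--Zygmund good/bad decomposition at level $\lambda$ based on Theorem \ref{czd1}. After the standard reduction to $\mu,\nu$ with compact support, $\supp\nu\subset N(\Gamma,\diam(\Gamma))$, and $\lambda>2^{d+1}\|\nu\|/\|\mu\|$ (the remaining cases being controlled by the trivial bound $\mu(\cdot)\le\|\mu\|$ or by the kernel size estimate applied to pieces of $\nu$ far from $\Gamma$), I apply Theorem \ref{czd1} to obtain almost disjoint balls $\{B_i\}$, the residual density $g$ on $\Gamma$, and cancellation functions $\vphi_i$ with $\supp\vphi_i\subset R_i\cap\Gamma$, choosing $r(R_i)=10\,r(B_i)$. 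I then write $\nu=\nu_g+\nu_b$ with
$$
\nu_g := g\,\hd\lfloor_{\Gamma\setminus\bigcup_i B_i} + \sum_i \vphi_i\,\hdg =: b\,\hdg,\qquad
\nu_b := \sum_i \nu_i,\quad \nu_i := w_i\nu - \vphi_i\,\hdg.
$$
By \eqref{cc3'} and \eqref{cc5'}, $|b|\le C\lambda$; by \eqref{cc4'}, each $\nu_i$ has zero integral and support in $R_i$; and $\|b\|_{L^1(\hdg)}+\sum_i\|\nu_i\|\lesssim\|\nu\|$ thanks to \eqref{cc6'}, the constant sign of the $\vphi_i$, and the almost disjointness of the $B_i$. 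Setting $E:=\bigcup_i 2B_i$, the bound \eqref{cc1'} gives $\mu(E)\lesssim\lambda^{-1}\sum_i|\nu|(B_i)\lesssim\|\nu\|/\lambda$.

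The good part is then immediate from the hypothesis: $|b|\le C\lambda$ and $\|b\|_{L^1(\hdg)}\lesssim\|\nu\|$ give $\|b\|_{L^2(\hdg)}^2\lesssim\lambda\|\nu\|$, so the assumed $L^2(\hdg)\to L^2(\mu)$ boundedness of $T^k_{\hdg}$ combined with Chebyshev yields $\mu(\{|T^k_\ve\nu_g|>\lambda/2\})\lesssim\|\nu\|/\lambda$. For the bad part, Chebyshev reduces matters to $\int_{E^c}|T^k_\ve\nu_b|\,d\mu\lesssim\|\nu\|$, and since $E^c\subset\R^d\setminus 2B_i$ for every $i$, this follows by summation from the per-block estimate $\int_{\R^d\setminus 2B_i}|T^k_\ve\nu_i|\,d\mu\lesssim|\nu|(B_i)$.

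I would prove the per-block estimate by splitting the domain as $(\R^d\setminus 2R_i)\cup(2R_i\setminus 2B_i)$. On the far region $\R^d\setminus 2R_i$, the zero mean of $\nu_i$ combined with the H\"older smoothness of $k$ (with base point the centre of $R_i$) and the $(d-1)$-growth of $\mu$, summed over dyadic annuli, delivers the standard tail bound $\lesssim\|\nu_i\|\lesssim|\nu|(B_i)$. On $2R_i\setminus 2B_i$ I further split $\nu_i=w_i\nu-\vphi_i\hdg$. For the first piece, $\supp(w_i\nu)\subset B_i$ with $\dist(x,B_i)\ge r(B_i)$ when $x\notin 2B_i$, so the pointwise size bound on $k$ and a dyadic annular decomposition around $B_i$ (with only $O(1)$ many annuli, since $r(R_i)/r(B_i)$ is fixed) produce $\int_{2R_i\setminus 2B_i}|T^k_\ve(w_i\nu)|\,d\mu\lesssim|\nu|(B_i)$.

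The main obstacle is the remaining contribution $\int_{2R_i}|T^k_\ve(\vphi_i\hdg)|\,d\mu$: a pointwise kernel-size estimate fails here because $\mu$ may charge $\Gamma$, along which $\int_{R_i\cap\Gamma}|x-y|^{-(d-1)}\,d\hd(y)$ diverges logarithmically as $x\to\Gamma$. The key idea is to invoke the $L^2$ hypothesis a second time, via Cauchy--Schwarz:
$$
\int_{2R_i}|T^k_\ve(\vphi_i\hdg)|\,d\mu
\;\le\; \mu(2R_i)^{1/2}\,\|T^k_{\hdg,\ve}\vphi_i\|_{L^2(\mu)}
\;\lesssim\; \mu(2R_i)^{1/2}\,\|\vphi_i\|_{L^2(\hdg)}.
$$
The $(d-1)$-growth of $\mu$ gives $\mu(2R_i)\lesssim r(R_i)^{d-1}$, while \eqref{cc4'} and \eqref{cc6'} together with the constant sign of $\vphi_i$ imply $\|\vphi_i\|_{L^2(\hdg)}^2\le\|\vphi_i\|_{L^\infty(\hdg)}\|\vphi_i\|_{L^1(\hdg)}\lesssim|\nu|(B_i)^2/r(R_i)^{d-1}$. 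The two factors of $r(R_i)^{(d-1)/2}$ cancel, yielding $\lesssim|\nu|(B_i)$, and summation in $i$ completes the argument.
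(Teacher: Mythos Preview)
Your proof is correct and follows essentially the same approach as the paper: the same Calder\'on--Zygmund decomposition from Theorem~\ref{czd1} with $R_i=10B_i$, the same good/bad split $\nu=\kappa+\beta$ (your $\nu_g,\nu_b$), the exceptional set $\bigcup_i 2B_i$, Chebyshev plus the $L^2(\hdg)\to L^2(\mu)$ hypothesis for the good part, and for the bad part the same far/near split with the key use of Cauchy--Schwarz and the $L^2$ hypothesis to handle $\int_{2R_i}|T^k_\ve(\vphi_i\hdg)|\,d\mu$. The only cosmetic difference is that the paper bounds $\|\vphi_i\|_{L^2(\hdg)}$ via $\|\vphi_i\|_{L^\infty}^2\hd(\Gamma\cap R_i)$ rather than your $\|\vphi_i\|_{L^\infty}\|\vphi_i\|_{L^1}$, but both lead to the same conclusion via \eqref{cc6'}.
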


To simplify notation, below we will write $T$ instead of $T^k$.

\begin{proof}
Suppose first that both $\mu$ and $\nu$ have compact support and $\supp \nu \subset N(\Gamma, \diam(\Gamma))$. 
Clearly, we may assume that $\lambda>2^{d+1}\|\nu\|/\|\mu\|$.

Let $\{B_i\}_i$ be the almost disjoint
family of balls of Theorem \ref{czd1}.
Let $R_i=10 B_i$ and notice that $r(R_i) \leq 30 \diam(\Gamma)$ recalling that $r(B_i)\leq 3 \diam(\Gamma)$. Then we can write $\nu=\kappa+\beta$, with
$$\kappa= \nu\lfloor_{\R^d\setminus\bigcup_i B_i} + \sum_i \vphi_i\hdg$$
and
$$\beta = \sum_i \beta_i := \sum_i \left(w_i\, \nu \lfloor_{B_i} - \vphi_i\hdg\right),$$
where the functions $\vphi_i$ satisfy \rf{cc4'}, \rf{cc5'} \rf{cc6'} and
$w_i= \frac{\chi_{B_i}}{\sum_k \chi_{B_k}}$.
Moreover, $\nu \lfloor_{\R^d \setminus \cup B_i}= g \hdg$ with $|g| \leq c \lambda$ $\hd$-a.e. in $\Gamma$
by \rf{cc3'}. Therefore $\kappa= \tilde{g} \hdg$ with $\tilde{g}=\sum_i \vphi_i + g.$ In particular, $|\tilde{g}| \leq C \lambda$ $\hd$-a.e. in $\Gamma$.

By \rf{cc1} we have
$$\mu\Bigl(\bigcup_i 2B_i\Bigr) \leq \frac{c}{\lambda} \sum_i
|\nu|(B_i)\leq \frac{c}{\lambda}\, \|\nu\|.$$
So we have to prove that
\begin{equation}  
\label{eqq}
\mu\Bigl\{x\in \R^d\setminus \bigcup_i 2B_i:\,|T_{\ve} \nu(x)|>\lambda\Bigr\} \leq
\frac{c}{\lambda}\,\|\nu\|.
\end{equation}

We will first show that
\begin{equation}
\label{fint}
\int_{\R^d\setminus\bigcup_k 2B_i} |T_{\ve} \beta|\,d\mu \leq C \|\nu\|.
\end{equation}
Since $\beta_i(R_i)=\int_{B_i} w_i d \nu_i - \int_\Gamma \vphi_i d  \hd=0$ and $\supp(\beta_i)\subset R_i$ by  standard estimates 
we deduce that
$$\int_{\R^d\setminus 2R_i} |T_{\ve} \beta_i|\,d\mu \leq C\,\|\beta_i\| \leq C(|\nu|(B_i)+\int_\Gamma |\vphi_i| d \hd )\leq c|\nu|(B_i)
.$$
We will now check that
\begin{equation}  \label{27b}
\int_{2R_i\setminus 2B_i} |T_{\ve} \beta_i|\,d\mu \leq
c\,|\nu|(B_i).
\end{equation}
Observe that $|T_\ve(w_i \nu \lfloor_{B_i})(x)|\leq c\,|\nu|(B_i)/r(B_i)$ for any $x\in 2R_i\setminus B_i$.
Therefore,
\begin{equation}\label{wivi}
\begin{split}
\int_{2R_i \setminus 2 B_i}|T_\ve(w_i \nu \lfloor_{B_i})|d \mu \leq 
 C  \frac{|\nu| (B_i)\,\mu(2 R_i)}{r(B_i)^{d-1}} \leq C |\nu| (B_i),
\end{split}
\end{equation}
because $\mu$ has $(d-1)$-growth and $R_i=10 B_i$.

On the other hand, using Cauchy-Schwarz and (\ref{cc6}) we get
\begin{equation}
\begin{split}
\label{fi}
\int_{2R_i} |T_{\ve}( \vphi_i \hdg)| \,d\mu & \leq 
\left(\int |T_{\hdg,\ve}( \vphi_i)|^2 \,d\mu\right)^{1/2} \, \mu(2R_i)^{1/2} \\
& \leq  c
\left(\int_\Gamma |\vphi_i|^2 \,d\hd\right)^{1/2} \, \mu(2R_i)^{1/2} \\
& \leq c \left( \|\vphi_i\|^2_{L^\infty(\hdg)} \,\hd(\Gamma \cap R_i)\right)^{1/2}\mu(2R_i)^{1/2} \\
&\leq c\,\|\vphi_i\|_{L^\infty(\hdg)} r(R_i)^{d-1} \leq c\,|\nu|(B_i).
\end{split}
\end{equation}

Combining (\ref{wivi}) and (\ref{fi}), we obtain (\ref{27b}). Then we deduce
\begin{align*}
\int_{\R^d\setminus\bigcup_k 2B_k} |T_{\ve} \beta|\,d\mu & \leq 
\sum_i \int_{\R^d\setminus\bigcup_k 2B_k} |T_{\ve} \beta_i|\,d\mu  \leq  c\,\sum_i |\nu|(B_i) \, \leq \, c\,\|\nu\|,
\end{align*}
by the finite overlap of the balls $B_i$. Thus (\ref{fint}) is proven. This implies that
\begin{equation} \label{bb}
\mu\Bigl\{x\in \R^d\setminus \bigcup_i 2B_i:\,|T_{\ve} \beta(x)|>\lambda/2\Bigr\} \leq
\frac{c}{\lambda}\|\nu\|.
\end{equation}

Recalling that $\kappa= \tilde{g} \hdg= \nu\lfloor_{\R^d\setminus\bigcup_i B_i} + \sum_i \vphi_i\hdg$ we get
\begin{align*}
\int_\Gamma |\tilde{g}|\,d\hd & \leq  |\nu|(\R^d\setminus \bigcup_i B_i) +
\sum_i \int_\Gamma |\vphi_i|\,d\hd \\
& \leq  \|\nu\| + \sum_i |\nu|(B_i) \, \leq \,
 c\,\|\nu\|,
\end{align*}
by the finite overlap of the balls $B_i$. Taking into account that $|\tilde{g}|\leq c\,\lambda$ $\hd$-a.e. in $\Gamma$ we get
\begin{equation}
\begin{split}  
\label{gg}
\mu\Bigl\{x\in \R^d\setminus & \bigcup_i 2B_i:\,|T_{\ve}\kappa(x)|>\lambda/2\Bigr\} \\ 
&\leq \frac{c}{\lambda^2}\int |T_{\ve}\kappa|^2 d \mu = \frac{c}{\lambda^2} \int |T_{\hdg,\ve}(\tilde{g})|^2 d \mu \\
&\leq \frac{c}{\lambda^2} \int_\Gamma |\tilde{g}|^2\,d\hd \leq \frac{c}{\lambda} \int_\Gamma |\tilde{g}|\,d\hd\leq \frac{c}{\lambda} \|\nu\|.
\end{split}
\end{equation}
Now, by \rf{bb} and \rf{gg} we get \rf{eqq}.

\vspace{2mm}
In the case that $\nu, \mu$ have compact support but $\supp\nu \not\subset N(\Gamma, \diam(\Gamma))$, we split $$\nu=\nu_1+\nu_2:= \nu \lfloor_{N(\Gamma, \diam(\Gamma))}+\nu \lfloor_{\R^d \setminus N(\Gamma, \diam(\Gamma))}.$$
For $\nu_1$ we have shown that the estimate (\ref{eqdebil49}) holds. For $\nu_2$, using that $\dist(\supp \nu_2,\mu)\geq \diam(\Gamma)$ and that $\|\mu\| \leq c_\mu (\diam \Gamma)^{d-1}$, we deduce that
$$|T_{\ve}\nu_2(x)|\leq C\,\frac{\|\nu_2\|}{\diam(\Gamma)^{d-1}}\qquad\mbox{for all $x\in\supp\mu$.}$$
Therefore,
\begin{equation*}
\begin{split}  
\mu\Bigl\{x:\,|T_{\ve}(\nu_2)(x)|>\lambda \Bigr\} &\leq \frac{c}{\lambda}\int |T_{\ve}(\nu_2)|d \mu  
\leq C\,\frac{\|\nu_2\|\,\|\mu\|}{\diam(\Gamma)^{d-1}}\leq C\,\|\nu_2\|.
\end{split}
\end{equation*}

\vspace{2mm}
Suppose now that $\mu$ is compactly supported but not $\nu$. 
Let $N_0$ be such that $\supp\mu\subset B(0,N_0)$, and for some $N>N_0$, let 
$\nu_N = \chi_{B(0,N)}\,\nu$.
 Then, for $x\in\supp(\mu)$,
$$|T_\ve(\nu-\nu_N)(x)|\leq c\,\frac{|\nu|(\R^d\setminus B(0,N))}{(N-N_0)^{d-1}}.$$
Thus $T_\ve\nu_N(x)\to T_\ve\nu(x)$ for all $x\in \supp(\mu)$, and since the estimate 
\rf{eqdebil49} holds
for $\nu_N$, letting $N\to\infty$, we deduce that it also holds for $\nu$.

On the other hand, if $\mu$ is not compactly supported, then for $\mu_N=\mu\lfloor B(0,N)$,
$$\mu_N\{x\in\R^d:\,|T_\ve\nu(x)|>\lambda\}\leq c\frac{\|\nu\|}\lambda$$
uniformly on $N$, and then \rf{eqdebil49} follows in full generality.
\end{proof}
\vspace{2mm}

Thus we have proved (ii) of Theorem \ref{main}. For the proof of (i) we need to introduce some additional notation and recall some well known results. 
If $\mu$ is any non-negative Radon measure, we define a radial maximal function by
$$M_R\mu(x)=\sup_{r>0}r^{1-d}\mu(B(x,r)).$$
If $f$ is a measurable function we also set
$$M^\mu_Rf(x):=M_R(|f|\mu)(x)=\sup_{r>0}r^{1-d}\int_{B(x,r)}|f|d\mu.$$

It follows (see for example \cite{db}) that if $\mu$ and $\nu$ have $(d-1)$-growth and $1 <p <\infty$ then for $f \in L^p(\mu)$,
\begin{equation}
\label{boundmax}
\|M^\mu_Rf\|_{L^p(\nu)} \leq c(p,\mu,\nu) \|f\|_{L^p(\mu)}.
\end{equation}

We now define the $q$-radial Maximal operator for a measurable function $f$ with respect to a non-negative Radon measure $\mu$ by
$$M^\mu_{R,q}(f)(x)=\sup_{r>0}\left( r^{1-d} \int_{B(x,r)}|f|^q d \mu\right)^{1/q}.$$
For $p>q$, noticing that $|g|^q \in L^{p/q}(\mu)$ as $g \in L^{p}(\mu)$ and using (\ref{boundmax})
\begin{equation}
\begin{split}
\label{maxqb}
\|M^\mu_{R,q}(g)\|_{L^{p}(\nu)}^{p}&=\int (M^\mu_{R,q}(g))^{p} d \nu=\int (M^\mu_R(|g|^p))^{p/q}d \nu \\
&\lesssim \int (|g|^q)^{p/q}d \mu=\|g\|_{L^{p}(\mu)}^{p}.
\end{split}
\end{equation}

The following easy lemma can be found for example in \cite{tow11}. 
\begin{lemma} 
\label{facil}
Let $\mu$ be a positive measure on $\R^d$, $x\in\R^d$, and $\rho,\eta>0$, such that
$$\mu(B(x,r)) \leq c_0 r^n$$
for all $r\geq \rho$. Then,
\begin{equation}\label{eqfacil}
\int_{|y-x|\geq \rho} \frac1{|y-x|^{n+\eta}} \,d\mu(y) \leq c(n,\eta)\,\frac{c_0}{\rho^\eta}.
\end{equation}
\end{lemma}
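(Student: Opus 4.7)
The plan is to decompose the region $\{y : |y-x| \geq \rho\}$ into dyadic annuli and estimate the resulting sum as a convergent geometric series. Specifically, for each integer $k \geq 0$, let
$$A_k = \{y \in \R^d : 2^k \rho \leq |y-x| < 2^{k+1}\rho\},$$
so that $\{y : |y-x|\geq \rho\} = \bigcup_{k \geq 0} A_k$ is a disjoint union. On $A_k$ the integrand is controlled pointwise by $|y-x|^{-n-\eta} \leq (2^k\rho)^{-n-\eta}$.

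For the measure of $A_k$, observe that $A_k \subset B(x, 2^{k+1}\rho)$, and since $2^{k+1}\rho \geq \rho$ the growth hypothesis applies and gives
$$\mu(A_k) \leq \mu(B(x, 2^{k+1}\rho)) \leq c_0 (2^{k+1}\rho)^n = 2^n c_0 (2^k\rho)^n.$$
Combining these two bounds,
$$\int_{A_k} \frac{1}{|y-x|^{n+\eta}}\,d\mu(y) \leq \frac{2^n c_0 (2^k\rho)^n}{(2^k\rho)^{n+\eta}} = \frac{2^n c_0}{\rho^\eta}\, 2^{-k\eta}.$$

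Summing over $k \geq 0$, the geometric series $\sum_{k \geq 0} 2^{-k\eta}$ converges (since $\eta > 0$) to $(1 - 2^{-\eta})^{-1}$, and we obtain \eqref{eqfacil} with constant $c(n,\eta) = 2^n / (1 - 2^{-\eta})$. There is no real obstacle here; the only subtlety worth flagging is that the growth hypothesis is used only at radii $2^{k+1}\rho \geq \rho$, which is precisely what the statement provides—one does not need any control on $\mu$ at smaller scales, which is exactly why this lemma is useful for tail estimates in the rest of the paper.
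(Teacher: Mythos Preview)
Your proof is correct and is the standard dyadic-annulus argument. The paper itself does not prove this lemma but simply cites \cite{tow11}; the argument there is essentially the same as yours.
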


\vspace{2mm}
The statement in (i) follows from the next theorem
and interpolation.
\vspace{2mm}

\begin{theorem}\label{teo32}
\label{wpp}
Let $U\subset \R^d$ be a domain with $(d-1)$-AD regular boundary $\Gamma$. 
Let $\mu, \nu$ be two measures with $(d-1)$-growth such that $\mu(\R^d\setminus U)=\nu(U)=0$.
Let $k$ be a $(d-1)$-dimensional Calder\'on-Zygmund kernel such that the operator $T_{\mathcal{H}^{d-1} \lfloor_\Gamma}^k:L^q(\mathcal{H}^{d-1} \lfloor_\Gamma)\ra L^q(\mu)$ is bounded for some $1<q<\infty$.
 Then the operator $T^k$ is bounded from $L^p(\nu)$ into
$L^{p,\infty}(\mu)$ for $1 < p <q$. That is for all $f \in L^p(\nu)$ and for all 
 $\lambda>0$,
\begin{equation}
\label{eqdebilp}
\mu\{x \in \R^d: |T_\ve^k(f\nu)(x)|>\lambda\} \leq \frac{c}{\lambda^p}\|f\|^p_{L^p(\nu)},
\end{equation}
uniformly on $\ve$.
\end{theorem}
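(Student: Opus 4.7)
The plan is to follow the same overall strategy as the proof of Theorem \ref{w11}, with Theorem \ref{czd2} playing the role of Theorem \ref{czd1}. After the standard reductions ($\mu,\nu$ compactly supported, $\supp\nu\subset N(\Gamma,\diam(\Gamma))$, and $\lambda>\bigl(2^{d+1}\|f\|^p_{L^p(\nu)}/\|\mu\|\bigr)^{1/p}$), apply Theorem \ref{czd2} to $f$ at level $\lambda$, choosing $R_i=10B_i$ (legitimate since $r(B_i)\le 3\diam(\Gamma)$) so that $\{R_i\}$ inherits bounded overlap from $\{B_i\}$. Decompose $f\nu=\kappa+\beta$ exactly as in Theorem \ref{w11}: $\kappa=\tilde g\,\hdg$ on $\Gamma$ with $|\tilde g|\le c\lambda$, and $\beta=\sum_i\beta_i$ with each $\beta_i=w_if\nu\lfloor_{B_i}-\vphi_i\hdg$ balanced ($\beta_i(R_i)=0$). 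From \rf{cc1} one immediately obtains $\mu(\bigcup_i 2B_i)\le c\lambda^{-p}\|f\|^p_{L^p(\nu)}$.

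For the good part, combining the $L^q(\hdg)\to L^q(\mu)$ hypothesis, Chebyshev's inequality, and the interpolation $\|\tilde g\|^q_{L^q(\hdg)}\le\|\tilde g\|_{L^\infty}^{q-p}\|\tilde g\|^p_{L^p(\hdg)}$ (valid because $p<q$) yields
\[
\mu\{|T_\ve\kappa|>\lambda/2\}\le c\lambda^{-q}\|\tilde g\|^q_{L^q(\hdg)}\le c\lambda^{-p}\|\tilde g\|^p_{L^p(\hdg)}.
\]
The required bound $\|\tilde g\|^p_{L^p(\hdg)}\le c\|f\|^p_{L^p(\nu)}$ splits into two pieces. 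First, $|g|\le c\,M^\nu_R|f|$ on $\Gamma$ by AD-regularity of $\hdg$, so \rf{boundmax} controls $\|g\|_{L^p(\hdg)}$. Second, for $x\in R_i\cap\Gamma$ one has $B_i\subset B(x,2r(R_i))$, hence $\int_{B_i}|f|\,d\nu\le c\,r(R_i)^{d-1}M^\nu_R|f|(x)$; combined with \rf{cc6} this gives the pointwise bound $|\vphi_i(x)|\le c\,M^\nu_R|f|(x)$, and the bounded overlap of $\{R_i\}$ upgrades this to $\sum_i|\vphi_i|\le c\,M^\nu_R|f|$ on $\Gamma$, so \rf{boundmax} again finishes the estimate.

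For the bad part, the mean-zero cancellation $\beta_i(R_i)=0$, the smoothness of $k$, and the $(d-1)$-growth of $\mu$ handle the annulus $\R^d\setminus 2R_i$ exactly as in Theorem \ref{w11}, while the Cauchy--Schwarz step of \rf{fi} is replaced by a H\"older step with exponent $q$ (legitimate under the $L^q(\hdg)\to L^q(\mu)$ hypothesis). This yields $\int_{\R^d\setminus 2B_i}|T_\ve\beta_i|\,d\mu\le c\int_{B_i}|f|\,d\nu$, and Chebyshev gives
\[
\mu\bigl\{x\in\R^d\setminus {\textstyle\bigcup_i}2B_i:|T_\ve\beta(x)|>\lambda/2\bigr\}\le\frac{c}{\lambda}\sum_i\int_{B_i}|f|\,d\nu.
\]
Using $\Gamma\cap 6B_i\ne\emptyset$ (established in the proof of Theorem \ref{czd2}) together with AD-regularity of $\Gamma$, the same bounded-overlap/maximal-function device applied to $\sum_i\vphi_i$ transforms the sum into an $\hdg$-integral of $M^\nu_R|f|$ over $\bigcup_i 6B_i\cap\Gamma$; a Calder\'on truncation at the level $\lambda$ --- with the portion $M^\nu_R|f|>\lambda$ absorbed into $\lambda^{1-p}\|f\|^p_{L^p(\nu)}$ and the portion $M^\nu_R|f|\le\lambda$ controlled by the weak-$(p,p)$ bound $\hdg\{M^\nu_{R,p}f>c\lambda\}\le c\lambda^{-p}\|f\|^p_{L^p(\nu)}$ coming from \rf{maxqb} --- delivers the closing estimate $\sum_i\int_{B_i}|f|\,d\nu\le c\lambda^{1-p}\|f\|^p_{L^p(\nu)}$.

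The main obstacle I anticipate is precisely this last summability step: the decomposition of Theorem \ref{czd2} naturally controls $\sum_i\mu(2B_i)$ rather than $\sum_i\nu(B_i)$, and bridging the two measures forces one to exploit the $(d-1)$-AD regularity of $\Gamma$, which is the place where the geometry of the separating boundary is essential. Once the summability is in hand, combining the good and bad estimates yields \rf{eqdebilp} for compactly supported $\mu$ and $\nu$, and the tail truncations at the end of the proof of Theorem \ref{w11} remove the support restrictions without any modification.
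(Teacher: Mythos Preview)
Your proposal has two related gaps, both rooted in the same false premise: the dilated balls $R_i=10B_i$ do \emph{not} inherit bounded overlap from the $B_i$. Besicovitch gives $\sum_i\chi_{B_i}\le c$, but $\sum_i\chi_{10B_i}$ can be arbitrarily large (many small balls clustering near one point). This is precisely why the construction of the $\vphi_i$ in Theorem~\ref{czd2}(b) requires the careful inductive ordering-by-size argument rather than a trivial pointwise bound. Consequently your step ``bounded overlap of $\{R_i\}$ upgrades $|\vphi_i(x)|\le cM_R^\nu|f|(x)$ to $\sum_i|\vphi_i|\le cM_R^\nu|f|$'' is unjustified, and the good-part estimate on $\|\sum_i\vphi_i\|_{L^p(\hdg)}$ collapses.

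The bad-part argument has an independent gap. Your $L^1$ route leads to needing $\sum_i\int_{B_i}|f|\,d\nu\le c\lambda^{1-p}\|f\|_{L^p(\nu)}^p$, and your sketch for this requires $\Gamma\cap 6B_i\subset\{M_{R,p}^\nu f>c\lambda\}$. But for $x\in\Gamma\cap 6B_i$ one only gets $M_{R,p}^\nu f(x)^p\gtrsim r(B_i)^{1-d}\int_{B_i}|f|^p\,d\nu>c\lambda^p\,r(B_i)^{1-d}\mu(2B_i)$ from \rf{cc1}, and concluding $M_{R,p}^\nu f(x)>c\lambda$ would need $\mu(2B_i)\gtrsim r(B_i)^{d-1}$, a \emph{lower} regularity that $\mu$ is not assumed to have. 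The Calder\'on--Zygmund decomposition here is adapted to $\mu$, and it controls $\sum_i\mu(2B_i)$; there is no mechanism to transfer this to $\sum_i\nu(B_i)$ or $\sum_i r(B_i)^{d-1}$.

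The paper circumvents both problems by working in $L^p$ via \emph{duality} rather than pointwise or $L^1$ bounds. For the bad part one proves the stronger estimate $\int_{\R^d\setminus\bigcup 2B_i}|T_\ve\beta|^p\,d\mu\le c\|f\|_{L^p(\nu)}^p$ by pairing against $g\in L^{p'}(\mu)$; each piece (far field, annulus for $w_if\nu$, annulus for $\vphi_i\hdg$) becomes $\lesssim\int_{B_i}|f|\cdot M_R^\mu g\,d\nu$ or $\lesssim\int_{B_i}|f|\cdot M_{R,q}^\mu g\,d\nu$, and the sum over $i$ uses only the bounded overlap of the \emph{original} balls $B_i$. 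The same device bounds $\|\sum_i\vphi_i\|_{L^p(\hdg)}$: testing against $g\in L^{p'}(\hdg)$ and using \rf{cc6} gives $\int_\Gamma|\vphi_i||g|\,d\hd\lesssim\int_{B_i}|f|\,M_R^{\hdg}g\,d\nu$, again summable by bounded overlap of $B_i$. The point is that duality transfers the dilation from the $R_i$-side to a maximal function of $g$, where it is harmless.
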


\begin{proof}
Suppose first that both $\mu$ and $\nu$ have compact support and $\supp \nu \subset N(\Gamma, \diam(\Gamma))$. 
Clearly, we may assume that $\lambda^p>2^{d+1}\|f\|^p_{L^p(\nu)}/\|\mu\|$.

Let $\{B_i\}_i$ be the almost disjoint family of balls of Lemma \ref{czd2}.
Let $R_i=10 B_i$ and notice that $r(R_i) \leq 30 \diam(\Gamma)$, since $r(B_i)\leq 3 \diam(\Gamma)$. Then we write $f\nu=\kappa+\beta$, with
$$\kappa= f\nu\lfloor_{\R^d\setminus\bigcup_i B_i} + \sum_i \vphi_i\hdg$$
and
$$\beta = \sum_i \beta_i := \sum_i \left(w_if\, \nu \lfloor_{B_i} - \vphi_i\hdg\right),$$
where the functions $\vphi_i$ satisfy \rf{cc4}, \rf{cc5} \rf{cc6} and
$w_i= \frac{\chi_{B_i}}{\sum_k \chi_{B_k}}$.

By Theorem \ref{czd2}, $f\nu \lfloor_{\R^d \setminus \cup B_i}= h \hdg$ with $|h| \leq c \lambda$ $\hd$-a.e. in $\Gamma$. Therefore $\kappa= \tilde{h} \hdg$ with $\tilde{h}=\sum_i \vphi_i + h$  and $|\tilde{h}| \leq C \lambda$ $\hd$-a.e. in $\Gamma$.
By \rf{cc1} we have
$$\mu\Bigl(\bigcup_i 2B_i\Bigr) \leq \sum_i \mu(2 B_i)\leq\frac{c}{\lambda^p} \sum_i
\int_{B_i}|f|^p d \nu \leq\frac{c}{\lambda^p} 
\int |f|^p d\nu.$$
So it remains to prove that
\begin{equation*}  
\mu\Bigl\{x\in \R^d\setminus \bigcup_i 2B_i:\,|T_{\ve}(f \nu)(x)|>\lambda\Bigr\} \leq
\frac{c}{\lambda^p}\,\|f\|^p_{L^p(\nu)}.
\end{equation*}

We will first show that
\begin{equation}
\label{fintp}
\int_{\R^d\setminus\bigcup_i 2B_i} |T_{\ve} \beta|^p\,d\mu \leq c \,\|f\|^p_{L^p(\nu)}.
\end{equation}
By duality
\begin{equation*}
\left(\int_{\R^d\setminus\bigcup_i 2B_i} |T_{\ve} \beta|^p\,d\mu \right)^{1/p}=\sup_{\substack{\supp(g)\subset \R^d\setminus\bigcup_i B_i,\\ \|g\|_{L^{p'}(\mu)}\leq 1 }}\left|\int_{\R^d\setminus\bigcup_i 2B_i} T_{\ve}(\beta) \ g d\mu\right|.
\end{equation*}
Then, for $g$ as above, we write
\begin{equation*}
\begin{split}
\left|\int_{\R^d\setminus\bigcup_i 2B_i} T_{\ve}(\beta) \, g \,d\mu\right| &\leq \sum_i \int_{\R^d \setminus 2 B_i}  |T_\ve\beta_i|\,|g|\, d \mu\\
&=\sum_i \int_{\R^d \setminus 2 R_i}  |T_\ve\beta_i|\,|g| \,d \mu+\sum_i \int_{2R_i \setminus 2 B_i}  |T_\ve(\beta_i)|\,|g|\, d \mu\\
&\leq \sum_i \int_{\R^d \setminus 2 R_i}  |T_\ve\beta_i|\,|g|\, d \mu+\sum_i \int_{2R_i \setminus 2 B_i}  |T_\ve(w_i f \nu \lfloor_{B_i})|\,|g|\, d \mu\\
&\quad+\sum_i \int_{2R_i \setminus 2 B_i}  |T_\ve(\varphi_i \hdg)|\,|g| \,d \mu \\&=: I + II+ III.
\end{split}
\end{equation*}
Therefore (\ref{fintp}) will follow if we prove that for any function $g$ such that $\supp(g)\subset \R^d\setminus\bigcup_i 2B_i$ with $\|g\|_{L^{p'}(\mu)}\leq 1$ we have $I+II+III\leq c  \|f\|^p_{L^p(\nu)}$.

To estimate $I$, using that, by (\ref{cc4}), $\beta_i(R_i)=\int_{B_i} w_i f d \nu - \int_\Gamma \vphi_i d  \hd=0$ and $\supp(\beta_i)\subset R_i,$ if $x_i$ stands for the center of $B_i$ and $y\notin 2 R_i$, we get
\begin{equation*}
\begin{split}
|T \beta_i(y)|&=\left|\int_{R_i}\bigl( k(y,x)-k(y,x_i)\bigr)\,d \beta_i(x)\right|\\
&\lesssim \int_{R_i}\frac{|x-x_i|^\eta}{|y-x_i|^{d-1+\eta}}d \beta_i(x)\leq \frac{r(R_i)^\eta\|\beta_i\|}{|y-x_i|^{d-1+\eta}}.
\end{split}
\end{equation*}  
Hence, using also that $\|\beta_i\|\leq2 \int_{B_i}|f|d \nu$, we obtain
\begin{equation}
\label{bife}
\begin{split}
\int_{\R^d \setminus 2 R_i}  |T_\ve\beta_i|\,|g|\, d \mu &\lesssim  \int_{\R^d \setminus 2 R_i}\frac{r(R_i)^\eta\|\beta_i\|}{|y-x_i|^{d-1+\eta}} |g(y)|d \mu (y) \\
&\lesssim  \int_{B_i}|f(x)|\left(\int_{\R^d \setminus 2 R_i} \frac{r(R_i)^\eta\,|g(y)|}{|y-x_i|^{d-1+\eta}} \,d \mu (y) \right)d \nu(x).
\end{split}
\end{equation} 

From the estimate \rf{eqfacil} applied to the measure $\mu'=|g|\mu$, 
taking into account that $\mu'(B(x,r)) \leq r^{d-1} M_R \mu'(x)$, we deduce that
$$\int_{\R^d \setminus 2 R_i} \frac{r(R_i)^\eta\,|g(y)|}{|y-x_i|^{d-1+\eta}} \,d \mu (y)\leq c\,M^\mu_Rg(x).$$
Hence by (\ref{bife}), H\"older's inequality, and (\ref{boundmax}),
\begin{equation*}
\begin{split}
I=\sum_i \int_{\R^d \setminus 2 R_i}  |T_\ve\beta_i|\,|g|\, d \mu &\leq \sum_i \int_{B_i}|f(x)|\, M^\mu_Rg(x)\,d \nu (x) \\
&\lesssim  \int |f(x)|\, M^\mu_Rg(x)\,d \nu (x) \\
&\leq \|f\|_{L^p(\nu)} \,\|M_R^\mu g\|_{L^{p'}(\nu)} \lesssim \|f\|_{L^p(\nu)} \,\|g\|_{L^{p'}(\mu)}
\leq \|f\|_{L^p(\nu)}.
\end{split}
\end{equation*}

To estimate $II$, notice that for $x\in 2R_i\setminus 2B_i$, 
$$|T_\ve(w_i f \nu \lfloor_{B_i})(x)|\leq  \|f\|_{L^{1}(\nu \lfloor_{B_i})}r(B_i)^{1-d}.$$
Therefore,
\begin{equation*}
\begin{split}
 \int_{2R_i \setminus 2 B_i}  |T_\ve(w_i f \nu \lfloor_{B_i})||g| d \mu &\leq \int_{2R_i \setminus 2 B_i} \|f\|_{L^{1}(\nu \lfloor_{B_i})}r(B_i)^{1-d}|g(x)|\,d \mu (x)\\
&\lesssim \int_{B_i}|f(y)|\left( r(R_i)^{1-d} \int_{2R_i}|g(x)|\, d \mu(x)\right)d \nu(y)\\
& \lesssim \int_{B_i}|f(y)| M^\mu_Rg(y)\,d \nu(y).
\end{split}
\end{equation*}
So using (\ref{boundmax}) we get:
\begin{equation*}
\begin{split}
II=\sum_i \int_{2R_i \setminus 2 B_i}  |T_\ve(w_i f \nu \lfloor_{B_i})||g| d \mu 
& \lesssim \sum_i \int_{B_i}|f(y)|\, M^\mu_Rg(y)\,d \nu(y)\\
&\lesssim \int |f(y)|\, M^\mu_Rg(y)\,d \nu(y) \\
& \leq \|f\|_{L^p(\nu)} \|M^\mu_Rg\|_{L^{p'}(\nu)}\\
& \lesssim \|f\|_{L^p(\nu)} \|g\|_{L^{p'}(\mu)}=\|f\|_{L^p(\nu)}.
\end{split}
\end{equation*}

We now turn our attention to the term $III$. Using H\"older's inequality for some $q <p'$ 
and the boundedness of $T_{\hdg}$ in $L^{q'}(\hdg)$,
 we get
\begin{equation*}
\begin{split}
 \int_{2R_i \setminus 2 B_i}  |T_\ve(\varphi_i \hdg)||g| d \mu &\leq \left( \int |T_{\hdg,\ve} \vphi_i|^{q'}d \mu\right)^{1/q'}\|g\ \chi_{2 R_i}\|_{L^q(\mu)}\\
&\lesssim \left( \int_{\Gamma} |\varphi_i |^{q'} d\hd\right)^{1/q'}\|g\ \chi_{2 R_i}\|_{L^q(\mu)}.
\end{split}
\end{equation*}
Using (\ref{cc6}), we obtain
\begin{equation*}
\begin{split}
\biggl( \int_{\Gamma} |\varphi_i |^{q'} d\hd \biggr)^{1/q'}\|g\, \chi_{2 R_i}\|_{L^q(\mu)} &\leq  
\|\vphi_i\|_{L^\infty(\hdg)}\,
 r(R_i)^{\frac{d-1}{q'}}\|g\ \chi_{2 R_i}\|_{L^q(\mu)} \\
&\lesssim  \frac{1}{r(R_i)^{d-1}}   
\int_{B_i}|f|d \nu\;
r(R_i)^{\frac{d-1}{q'}}\|g\ \chi_{2 R_i}\|_{L^q(\mu)}\\
&= \int_{B_i}|f(x)|\left( r(R_i)^{1-d} \int_{2 R_i}|g(y)|^q d \mu(y)\right)^{1/q}d \nu (x).
\end{split}
\end{equation*}
Notice that, for $x \in B_i$,
$$\left( r(R_i)^{1-d} \int_{2 R_i}|g(y)|^q d \mu(y)\right)^{1/q}\lesssim M^\mu_{R,q}(g)(x).$$
Combining all the above estimates and using H\"older's inequality we obtain
\begin{equation*}
\begin{split}
III=\sum_i \int_{2R_i \setminus 2 B_i}  &|T_\ve(\varphi_i \hdg)|\,|g|\,d \mu \leq \sum_i \int_{B_i}|f(x)|M^\mu_{R,q}g(x)d \nu (x)\\
&\lesssim \int |f(x)|M^\mu_{R,q}g(x)d \nu (x) \leq \|f\|_{L^p(\nu)}\|M^\mu_{R,q}g\|_{L^{p'}(\nu)}.
\end{split}
\end{equation*}
But for $p'>q$, $M^\mu_{R,q}$ is bounded in $L^{p'}(\nu)$, and thus
$$III\lesssim \|f\|_{L^p(\nu)}.$$
So \rf{fintp} is proved. 
By Chebyshev's inequality , this implies that
\begin{equation}
\label{eqqb} 
\mu\Bigl\{x\in \R^d\setminus \bigcup_i 2B_i:\,|T_{\ve}\beta(x)|>\lambda/2\Bigr\} \leq \frac{c}{\lambda^p} \int |T_{\ve}(\beta)|^p d \mu\leq \frac{c}{\lambda^p}\,\|f\|^p_{L^p(\nu)}.
\end{equation}

Now we have to estimate $\mu\{x \in \R^d: |T_\ve\kappa(x)|>\lambda/2\}$.
By Chebyshev's inequality,
\begin{equation}
\begin{split}
\label{sppoq1}
\mu\{x \in \R^d:& |T_\ve\kappa(x)|>\lambda/2\}\leq \frac{c}{\lambda^p}\int |T_{\ve}\kappa|^p d \mu\\
& \!\!\leq \frac{c}{\lambda^p}\left(\int |T_{\ve}(f\nu \lfloor_{\R^d \setminus \cup B_i})|^p d \mu
+\!
 \int \Bigl|T_{\hdg,\ve}\bigl( \sum_i \varphi_i\Bigr)\Bigr|^p d \mu\right).
 \end{split}
\end{equation}

Since $\nu \lfloor_\Gamma$ has $(d-1)$-growth and it is supported on $\Gamma$, it is absolutely continuous with respect to $\hdg$. Hence there exists some function $h',\ 0 \leq h'\lesssim 1$ $\hd$-a.e.\ in $\Gamma$ such that $\nu \lfloor_\Gamma=h' \hdg$. Furthermore, by Theorem \ref{czd2} $f\nu \lfloor_{\R^d \setminus \cup B_i}= h \hdg$ with $|h| \leq c \lambda$, $\hd$-a.e. in $\Gamma$. Thus $f\,h' \hdg=h \hdg$ and so\begin{equation}
\begin{split}
\label{shd1}
\int |T_{\ve}(f\nu \lfloor_{\R^d \setminus \cup B_i})|^p d \mu&=\int |T_{\hdg,\ve}(f\,h')|^p d \mu\\
&\lesssim \int_{\Gamma}|f|^p {h'}^p d \hd \lesssim \int_{\Gamma}|f|^p h'd \hd
=\int |f|^p d \nu.
\end{split}
\end{equation}

To estimate the last integral in \rf{sppoq1} we argue again by duality. Given any 
function $g\in L^{p'}(\hdg)$ with $\|g\|_{L^{p'}(\hdg)}\leq 1$, we have by (\ref{cc6})
\begin{equation*}
\begin{split}
\int_\Gamma |\varphi_i||g|d \hd &\leq   \int_{\Gamma\cap R_i} \|\vphi_i\|_{L^\infty(\hdg)} |g| d   \hd \\
&\leq \frac{1}{r(R_i)^{d-1}}\,\int_{B_i}|f|d \nu\; \int_{R_i}|g|d \hd 
\leq \int_{B_i} |f| M^{\hdg}_R(g)d \nu.
\end{split}
\end{equation*}
Therefore,
\begin{equation*}
\begin{split}
\int_\Gamma \sum_i |\varphi_i||g|d \hd &\lesssim \int |f| M^{\hdg}_R(g)d \nu 
\leq \|f\|_{L^p(\nu)} \| M^{\hdg}_R(g)\|_{L^{p'}(\nu)}\\
& \lesssim \|f\|_{L^p(\nu)}  \|g\|_{L^{p'}(\hdg)} \leq \|f\|_{L^p(\nu)}.
\end{split}
\end{equation*}
Hence
$$\int_\Gamma \bigl|\sum_i \varphi_i\bigr|^p d \hd \lesssim  \|f\|_{L^p(\nu)}.$$
Together with \rf{sppoq1} and \rf{shd1}, this yields
 
\begin{equation}
\label{final}
\mu\{x \in \R^d: |T_\ve\kappa(x)|>\lambda/2\}\leq \frac{c}{\lambda^p}\,\|f\|_{L^p(\nu)}
\end{equation}
Thus, when $\nu, \mu$ have compact support and $\supp\nu \subset N(\Gamma, \diam(\Gamma))$, (\ref{eqdebilp}) follows by (\ref{eqqb}), (\ref{sppoq1}), (\ref{shd1}) and (\ref{final}).

\vspace{2mm}
In the case that $\nu, \mu$ have compact support but $\supp\nu \not\subset N(\Gamma, \diam(\Gamma))$, we split $$f\nu=f\nu_1+f\nu_2:= f\nu \lfloor_{N(\Gamma, \diam(\Gamma))}+f\nu \lfloor_{\R^d \setminus N(\Gamma, \diam(\Gamma))}.$$
For $f\nu_1$ we already know that the estimate (\ref{eqdebilp}) holds. For $f\nu_2$, we
take into account that 
 $\dist(\supp \nu_2,\supp\mu)\geq \diam(\Gamma)$ and so, for $x\in\supp\mu$,
\begin{align*}
|T_{\ve}(f\nu_2)(x)|&\lesssim\int\frac{|f(y)|}{|x-y|^{d-1}}  d\nu_2(y)\\
&\leq \|f\|_{L^p(\nu_2)} \left(\int_{|x-y|>\diam(\Gamma)}\frac{1}{|x-y|^{p'(d-1)}}d\nu_2(y) \right)^{1/p'}
\\  
&\lesssim \|f\|_{L^p(\nu_2)}\,
 \frac{1}{\diam(\Gamma)^{(d-1)\frac{(p'-1)p}{p'}}}
 =\|f\|_{L^p(\nu_2)}\,
 \frac{1}{\diam(\Gamma)^{d-1}}.
 \end{align*}
Thus, using that $\|\mu\| \leq c_\mu (\diam \Gamma)^{d-1}$, 
\begin{equation*}
\begin{split}
\int |T_{\ve}(f\nu_2)|^p d \mu
&\lesssim \|f\|^p_{L^p(\nu)} \frac{\|\mu\|}{\diam(\Gamma)^{d-1}}\\
&\lesssim \|f\|^p_{L^p(\nu)}.
\end{split}
\end{equation*}
Therefore,
\begin{equation*}
\begin{split}  
\label{gg1}
\mu\Bigl\{x:\,|T_{\ve}(f\nu_2)(x)|&>\lambda \Bigr\}\leq \frac{1}{\lambda^p}\int |T_{\ve}(f\nu_2)|^p d \mu 
\lesssim \frac{1}{\lambda^p}\|f\|^p_{L^p(\nu)}.
\end{split}
\end{equation*}

\vspace{2mm}
Suppose now that $\mu$ is compactly supported but not $\nu$. 
Let $N_0$ be such that $\supp\mu\subset B(0,N_0)$, and for some $N>N_0$, let 
$\nu_N = \chi_{B(0,N)}\,\nu$. Then, for $x\in\supp\mu$,
$$|T_\ve(f(\nu-\nu_N))(x)|\leq \int_{\R^d \setminus  B(0,N)} |k(x,y)||f(y)|d \nu (y) \leq c\,\frac{\|f\|_{L^p(\nu)}\nu(\R^d\setminus B(0,N))^{1/p'}}{(N-N_0)^{d-1}}.$$
Thus $T_\ve\nu_N(x)\to T_\ve\nu(x)$ for all $x\in \supp\mu$, and since the estimate 
\rf{eqdebil49} holds
for $f\nu_N$, letting $N\to\infty$, we deduce that it also holds for $f\nu$.

On the other hand, if $\mu$ is not compactly supported, then for $\mu_N=\mu\lfloor B(0,N)$,
$$\mu_N\{x\in \R^d:\,|T_\ve(f\nu)(x)|>\lambda\}\leq c\frac{\|f\|^p_{L^p(\nu)}}{\lambda^p}$$
uniformly on $N$, and then \rf{eqdebil49} follows in full generality.
\end{proof}

\section{Remarks about the proof of Theorem \ref{davgenw}}

Following the same scheme as in the previous two sections we obtain the following theorem, which implies Theorem \ref{davgen} by interpolation.

\begin{theorem}
\label{wppm} 
Let $\mu, \nu$ two  Radon measures in $\R^d$ such that $\mu$ is $n$-AD regular and $\nu$ has $n$-growth. 
Let $k$ be an $n$-dimensional $CZ$-kernel such that $T^k_\mu$ is a bounded operator in $L^2(\mu)$.
 Then $T_\nu^k$ is bounded from $L^p(\nu)$ to
$L^{p,\infty}(\mu)$ and $T_\mu^k$ is bounded from $L^p(\mu)$ to
$L^{p,\infty}(\nu)$ for all $1<p<\infty$.
Moreover,
\begin{equation}
\label{w11meas}
T^k:M(\supp \mu)\rightarrow L^{1,\infty}(\nu)
\end{equation}
is also bounded.
\end{theorem}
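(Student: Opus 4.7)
The plan is to follow the strategy used to prove Theorems \ref{w11} and \ref{wpp}, with the role of the $(d-1)$-AD-regular boundary $\Gamma$ and its surface measure $\hdg$ taken over by the $n$-AD-regular set $\supp\mu$ and the measure $\mu$ itself. The strong $L^p\to L^p$ bounds of Theorem \ref{davgen} immediately imply the first two weak-$(p,p)$ statements of the theorem, so the essential new content is the endpoint estimate \eqref{w11meas}; this we prove by an almost verbatim adaptation of the argument for Theorem \ref{w11}.

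The first task is to establish an analogue of Theorem \ref{czd1} in the new setting. Given $\sigma\in M(\supp\mu)$ with compact support and $\lambda>c\,\|\sigma\|/\|\nu\|$, Besicovitch's theorem applied to a family of maximal balls produces an almost disjoint family $\{B_i\}_i$ centered at $\supp\sigma$ satisfying the stopping conditions $|\sigma|(B_i)>c\lambda\,\nu(2B_i)$ and $|\sigma|(\eta B_i)\leq c\lambda\,\nu(2\eta B_i)$ for $\eta>2$. Setting $R_i=10B_i$, the inductive construction of Theorem \ref{czd1} yields functions $\vphi_i$ of constant sign, supported in $R_i\cap\supp\mu$, with $\int\vphi_i\,d\mu=\int w_i\,d\sigma$, $\sum_i|\vphi_i|\leq c\lambda$, and $\|\vphi_i\|_{L^\infty(\mu)}\,r(R_i)^n\lesssim|\sigma|(B_i)$. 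The only geometric input is the lower bound $\mu(R_i)\gtrsim r(R_i)^n$, which holds because $B_i\cap\supp\mu\neq\emptyset$ and $\mu$ is $n$-AD regular. The density step is modified as follows: for $|\sigma|$-a.e.\ $z\in G:=\supp|\sigma|\setminus\bigcup_i B_i$ there exist $(2,2^{n+1})$-doubling balls $P_k$ of $|\sigma|$ centered at $z$ with $r(P_k)\to 0$ and $|\sigma|(2P_k)\leq c\lambda\,\nu(2P_k)$; the $n$-growth of $\nu$ together with AD-regularity of $\mu$ at $z\in\supp\mu$ give $\nu(2P_k)\lesssim r(P_k)^n\approx\mu(P_k)$, so Lebesgue differentiation yields $|\sigma|\lfloor_G=g_0\,\mu$ with $|g_0|\leq c\lambda$, and hence $\sigma\lfloor_{\R^d\setminus\bigcup_i B_i}=g\mu$ with $|g|\leq c\lambda$.

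Once this decomposition is available, the weak-$(1,1)$ estimate is obtained exactly as in Theorem \ref{w11}. Write $\sigma=\kappa+\beta$ with $\kappa=\wt g\,\mu$, $|\wt g|\leq c\lambda$, and $\beta=\sum_i\beta_i$, where $\beta_i:=w_i\sigma\lfloor_{B_i}-\vphi_i\mu$ has zero integral and is supported in $R_i$. The stopping condition immediately gives $\nu(\bigcup_i 2B_i)\lesssim\|\sigma\|/\lambda$. Outside the exceptional set, the cancellation $\beta_i(R_i)=0$ combined with kernel smoothness and the $n$-growth of $\nu$ gives $\int_{\R^d\setminus 2R_i}|T_\ve\beta_i|\,d\nu\lesssim\|\beta_i\|\lesssim|\sigma|(B_i)$; on the annulus $2R_i\setminus 2B_i$, the pointwise bound on $T_\ve(w_i\sigma\lfloor_{B_i})$ and Cauchy--Schwarz for $T_\ve(\vphi_i\mu)$ (using the $L^2(\mu)\to L^2(\nu)$ boundedness from Theorem \ref{davgen} together with $\|\vphi_i\|_{L^2(\mu)}^2\leq\|\vphi_i\|_\infty\|\vphi_i\|_{L^1(\mu)}\lesssim|\sigma|(B_i)^2/r(R_i)^n$) each contribute $\lesssim|\sigma|(B_i)$. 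Summing with finite overlap and applying Chebyshev handles the $\beta$ part. For the good part, the $L^2(\mu)\to L^2(\nu)$-boundedness combined with $\|\wt g\|_{L^2(\mu)}^2\leq\|\wt g\|_\infty\|\wt g\,\mu\|\lesssim\lambda\|\sigma\|$ and Chebyshev give $\nu(\{|T_\ve\kappa|>\lambda/2\})\lesssim\|\sigma\|/\lambda$. Finally, the standard truncation procedure at the end of the proof of Theorem \ref{w11} reduces the non-compact case to the compact one.

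The main subtlety lies in the density step. In Theorems \ref{czd1} and \ref{czd2} the geometric separation by $\Gamma$ was used to transfer an $\mu$-density of the non-stopped part into an $\hdg$-density; no such separation is available here, but the hypothesis $\supp\sigma\subset\supp\mu$ is enough for the doubling-ball argument to deliver a bounded density directly with respect to $\mu$. The rest is a routine substitution $\hdg\leadsto\mu$, with the $L^2(\hdg)\to L^2(\mu)$-boundedness of $T^k_{\hdg}$ used in the proof of Theorem \ref{w11} replaced by the $L^2(\mu)\to L^2(\nu)$-boundedness of $T^k_\mu$ supplied by Theorem \ref{davgen}.
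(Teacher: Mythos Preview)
Your proposal is correct and follows the paper's sketched approach: replace $\hdg$ by $\mu$ throughout, use the AD-regularity of $\mu$ wherever the AD-regularity of $\Gamma$ was used before, and invoke Theorem \ref{davgen} for the $L^2(\mu)\to L^2(\nu)$ bound needed to handle both the $\vphi_i$-pieces on the annuli and the good part $\kappa$. One detail is worth making explicit: the decomposition actually stated in the paper (Theorem \ref{czd3}) stops with respect to $\mu$ and is tailored to the $L^p(\nu)\to L^{p,\infty}(\mu)$ direction, whereas for \eqref{w11meas} you correctly stop with respect to $\nu$ so that the exceptional-set bound $\nu(\bigcup_i 2B_i)\lesssim\|\sigma\|/\lambda$ comes out in the right measure, and then use $\nu$-growth together with $\mu$-AD-regularity at points of $\supp\sigma\subset\supp\mu$ to nonetheless write the good part as $g\,\mu$ --- this twist is exactly what the paper's ``we omit the details'' is hiding, and you have handled it properly.
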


For the proof of Theorem \ref{wppm} we need the following Calder\'on-Zygmund decomposition,
which is analogous to the one
from Theorem \ref{teodescz1}.

\begin{theorem}
\label{czd3} Let $\mu, \nu$ two measures with  compact support such that $\mu$ is $n$-AD regular, $\nu$ has $n$-growth and $\supp \nu \subset N(\supp \mu, \diam(\supp \mu))$. Then for all $f \in L^p(\nu), \ 1\leq p <\infty,$ and for all 
 $\lambda>2^{d+1}\,\|f\|^p_{L^p(\nu)}/\|\mu\|$:
\begin{itemize}
\item[(a)] There exists a family of
almost disjoint balls $\{B_i\}_i$ (that is, $\sum_i \chi_{B_i} \leq
c$) centered at $\supp \nu$, with radius not exceeding $3\,\diam(\supp\mu)$,
 and a function $h\in L^1(\mu)$ such that
\begin{equation}  \label{cc11}
 \int_{B_i}|f|^p d\nu >\frac{\lambda^p}{2^{d+1}}\,\mu(2B_i),
\end{equation}
\begin{equation}  \label{cc22}
\int_{\eta B_i}|f|^p d\nu \leq
\frac{\lambda^p}{2^{d+1}}\,\mu(2\eta B_i) \quad
\mbox{for $\eta >2$,}
\end{equation}
\begin{equation}  \label{cc33}
f \nu \lfloor_{\R^d\setminus\bigcup_i
B_i} = h\,\mu   \quad\mbox{with $|h|\leq c \lambda$ \; $\mu$ a.e..}
\end{equation}

\vv
\item[(b)] For each $i$, let $R_i$ be a ball concentric
with $B_i$, with $10 r(B_i)\leq r(R_i) \leq 30 \diam(\supp \mu)$ and denote
$w_i= \frac{\chi_{B_i}}{\sum_k \chi_{B_k}}$. Then,
there exists a family of functions
$\vphi_i$ with $\supp(\vphi_i)\subset R_i$ and with constant sign satisfying
\begin{equation}  \label{cc44}
\int \vphi_i \,d\mu = \int_{B_i} w_i f\,d\nu,
\end{equation}
\begin{equation}  \label{cc55}
\sum_i |\vphi_i| \leq c_1\,\lambda
\end{equation}
(where $c_1$ is some fixed constant), and
\begin{equation}  \label{cc66}
\|\vphi_i\|_{L^\infty(\mu)} \, \mu(R_i)\leq
c\, \int_{B_i}|f|d \nu.
\end{equation}

\end{itemize}
\end{theorem}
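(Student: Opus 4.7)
The proof will closely follow that of Theorem \ref{teodescz1}, with $\mu$ itself taking the role previously played by $\mathcal{H}^{d-1}\lfloor_\Gamma$. Since $\mu$ is now directly $n$-AD regular rather than a measure with growth supported on an AD regular set, the intermediate passage from $\mu\lfloor_\Gamma$ to $\mathcal{H}^{d-1}\lfloor_\Gamma$ in the original argument is unnecessary, and the estimates become slightly cleaner.

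For part (a), I would define $F = \{x \in \supp\nu : \exists B_x \text{ centered at } x \text{ with } \int_{B_x}|f|^p d\nu > \tfrac{\lambda^p}{2^{d+1}}\mu(2B_x)\}$ and $G = \supp\nu \setminus F$. As in the earlier proof, the lower bound on $\lambda$ guarantees existence of a maximal $B_x$ for each $x \in F$ (any concentric ball with radius larger than $2r(B_x)$ fails the defining inequality), and the condition $\supp\nu \subset N(\supp\mu,\diam(\supp\mu))$ bounds $r(B_x) \leq 3\diam(\supp\mu)$. Besicovitch's covering theorem yields the almost disjoint subfamily $\{B_i\}$ satisfying \eqref{cc11}--\eqref{cc22}. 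Setting $\tau = |f|^p\nu$, extracting $(2,2^{d+1})$-$\tau$-doubling balls centered at $\tau$-almost every point of $G$ and applying Lebesgue-Radon-Nikodym gives $\tau\lfloor_G = h_1\mu$ with $0 \leq h_1 \leq \lambda^p$ $\mu$-a.e. Any $z \in G \cap \supp(f\nu)$ must lie in $\supp\mu$ (otherwise a small ball $B$ around $z$ has $\mu(2B) = 0$ and so $\int_B|f|^p d\nu = 0$, contradicting $z \in \supp(f\nu)$); hence for any ball $B$ centered at such $z$ the AD lower bound $\mu(B) \gtrsim r(B)^n$ applies, and H\"older gives
$$f\nu\lfloor_G(B) \leq \tau(B\cap G)^{1/p}\,\nu(B)^{1/p'} \lesssim \lambda\,\mu(B)^{1/p}\,r(B)^{n/p'} \lesssim \lambda\,\mu(B),$$
from which \eqref{cc33} follows with $|h| \lesssim \lambda$ $\mu$-a.e.

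For part (b), I would rerun the inductive construction of $\varphi_i = \alpha_i\chi_{A_i}$ with $A_i \subset R_i$, ordering the balls so that $r(R_i)$ is non-decreasing. The central geometric ingredient is the AD-type lower bound $\mu(R_i) \gtrsim r(R_i)^n$, which here substitutes for $\mathcal{H}^{d-1}(\Gamma \cap R_i) \gtrsim r(R_i)^{d-1}$ in the earlier proof. This is obtained exactly as before: the maximality of $B_i$ combined with \eqref{cc11} forces $\supp\mu \cap 6B_i \neq \varnothing$, so choosing $z_i$ in this intersection the ball $B(z_i,d_i)$ with $d_i = r(R_i)-6r(B_i) \gtrsim r(R_i)$ is contained in $R_i$, and AD regularity of $\mu$ yields $\mu(R_i) \gtrsim r(R_i)^n$. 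The overlap estimate
$$\sum_j \int |\varphi_{s_j}|\,d\mu \leq \sum_j \int_{B_{s_j}}|f|\,d\nu \lesssim \int_{3R_k}|f|\,d\nu \lesssim \lambda\,\mu(R_k)$$
(using H\"older with \eqref{cc22} applied to $3R_k$ and the $n$-growth of $\nu$), Chebyshev produces $A_k \subset R_k$ with $\mu(A_k) \geq \mu(R_k)/2$ on which $\sum_j |\varphi_{s_j}| \leq 2c_2\lambda$, and the normalisation \eqref{cc44} together with H\"older and the maximality estimate $\int_{\frac{1}{2}R_k}|f|^p d\nu \leq \tfrac{\lambda^p}{2^{d+1}}\mu(R_k)$ gives $|\alpha_k| \lesssim \lambda$; \eqref{cc66} then follows from $|\alpha_i|\mu(R_i) \leq \mu(R_i)/\mu(A_i) \cdot \int_{B_i}|f|d\nu \leq 2\int_{B_i}|f|d\nu$. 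The infinite-family case is handled by the same weak-$*$ diagonal-subsequence argument in $L^\infty(\mu)$ as in the earlier proof.

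I do not foresee a serious obstacle, since the argument is essentially a faithful translation of the proof of Theorem \ref{teodescz1}. The one detail requiring attention is that AD regularity gives $\mu(B(z,r)) \gtrsim r^n$ only for $0 < r \leq \diam(\supp\mu)$, while some radii that arise (such as $d_i$ or $r(R_i)$, which may be as large as $30\diam(\supp\mu)$) can exceed this range; in that regime one simply compares with a ball of radius $\diam(\supp\mu)$ centered at $z_i$, losing at most a dimensional constant.
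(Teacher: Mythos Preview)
Your proposal is correct and follows precisely the approach the paper intends: the paper itself omits the details of this proof, stating only that it ``follows the same steps of the proof of Theorem \ref{czd2}'' with the AD-regularity of $\mu$ playing the role previously played by that of $\Gamma$. Your translation is faithful, including the handling of the upper radius bound and the observation that $G\cap\supp(f\nu)\subset\supp\mu$ so that the AD lower bound is available where needed.
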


The proof of Theorem \ref{czd3} follows the same steps of the proof of Theorem \ref{czd2} and  it also makes 
essential use of the $AD$-regularity of the measure $\mu$. 
Then we can prove Theorem \ref{wppm} by following a strategy analogous to the one for
 Theorem \ref{wpp}. In every occasion where we needed to  
use the properties of the boundary $\Gamma$ in the proof of Theorem \ref{wpp}, for the proof of Theorem \ref{wppm} we  use now the $AD$-regularity of $\mu$.
We omit the details.

\section{Failure of Theorem \ref{davgen} for $\mu$ non AD-regular}

In the plane, we consider the so called corner quarters Cantor set. See Figure 1. This set is constructed in the following way: consider a square $Q^0$ with side length $1$. Now replace $Q^0$ by $4$ squares 
$Q^1_i$, $i=1,\ldots,4$, with side length $1/4$
contained in $Q^0$, so that each $Q^1_i$ contains a different vertex of $Q^0$.
Analogously, in the next stage each $Q^1_i$ is replaced by $4$ squares with side length $1/16$
contained in $Q^1_i$ so that each one contains a different vertex of $Q^1_i$. So we will have $16$ squares 
$Q^2_k$ of side length $1/16$. We proceed inductively (see Figure \ref{figcantor}), and we set $E_n= \bigcup_{i=1}^{4^n} 
Q^n_i$ and $E = \bigcap_{n=1}^\infty E_n$. This is the {corner quarters Cantor set}. It is not difficult to check that $0<\HH^1(E)<\infty$. 
 
\begin{figure} 
\begin{center}
\includegraphics[totalheight=20mm]{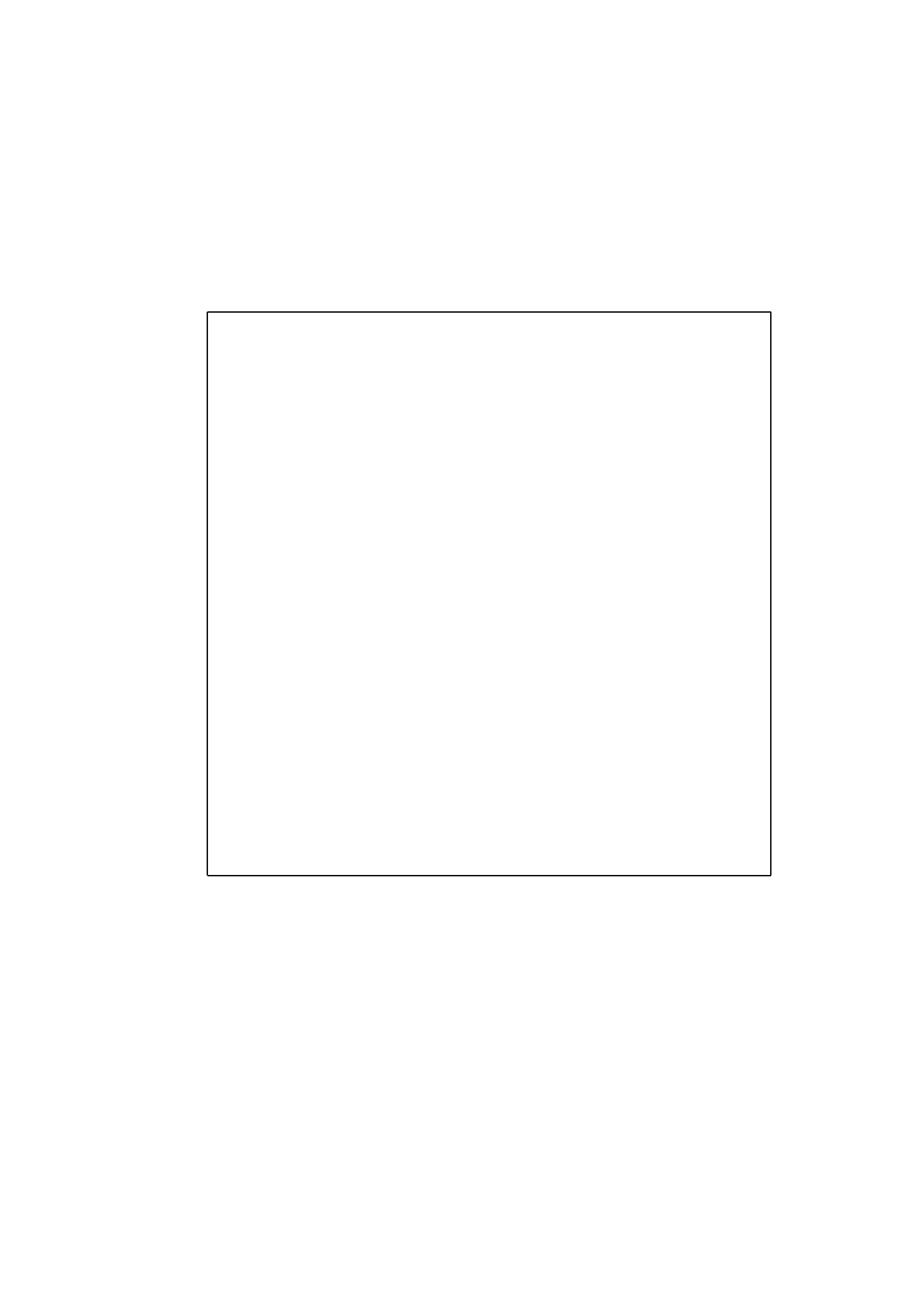}\hspace{18mm}
\includegraphics[totalheight=20mm]{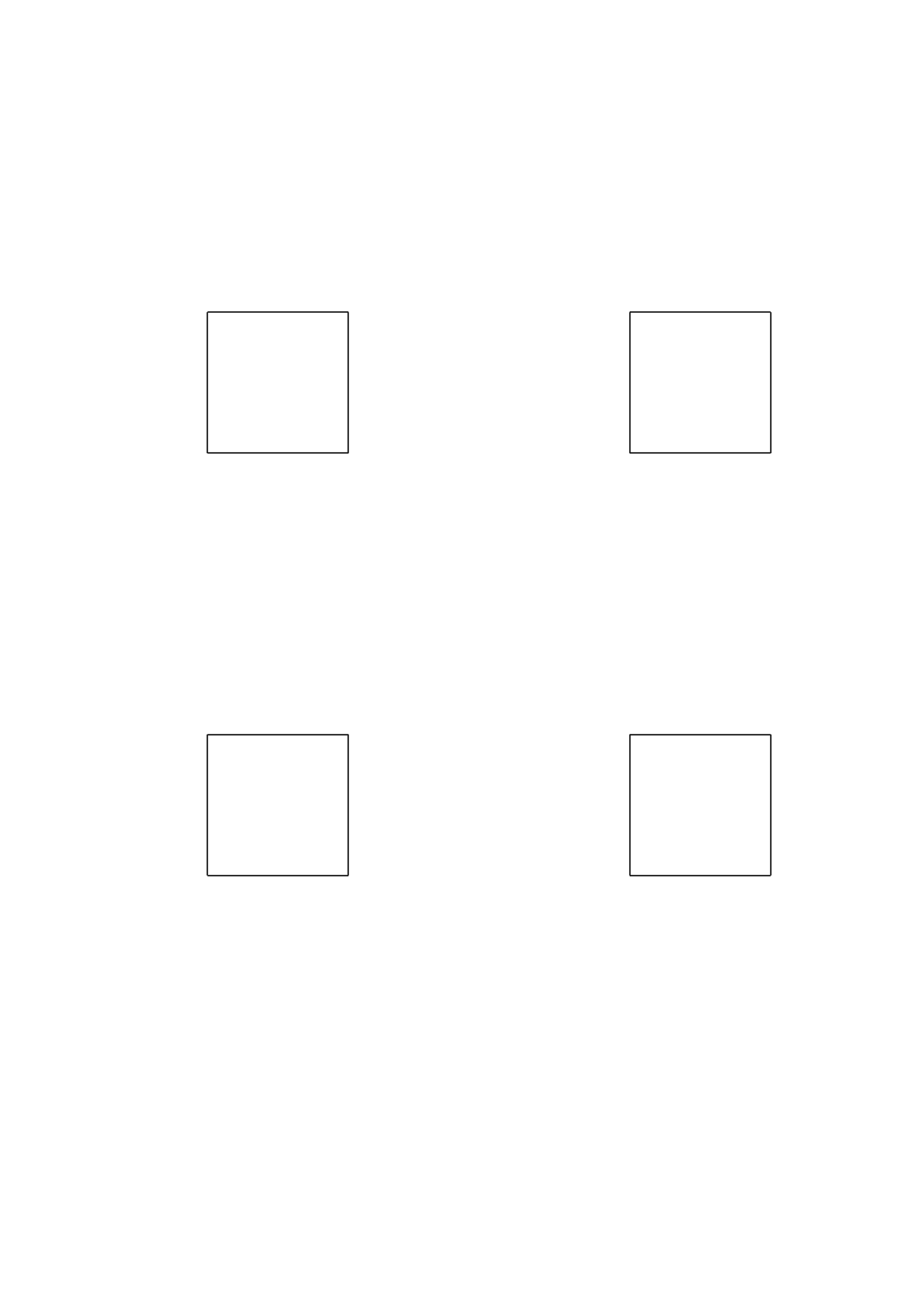}\hspace{18mm}
\includegraphics[totalheight=20mm]{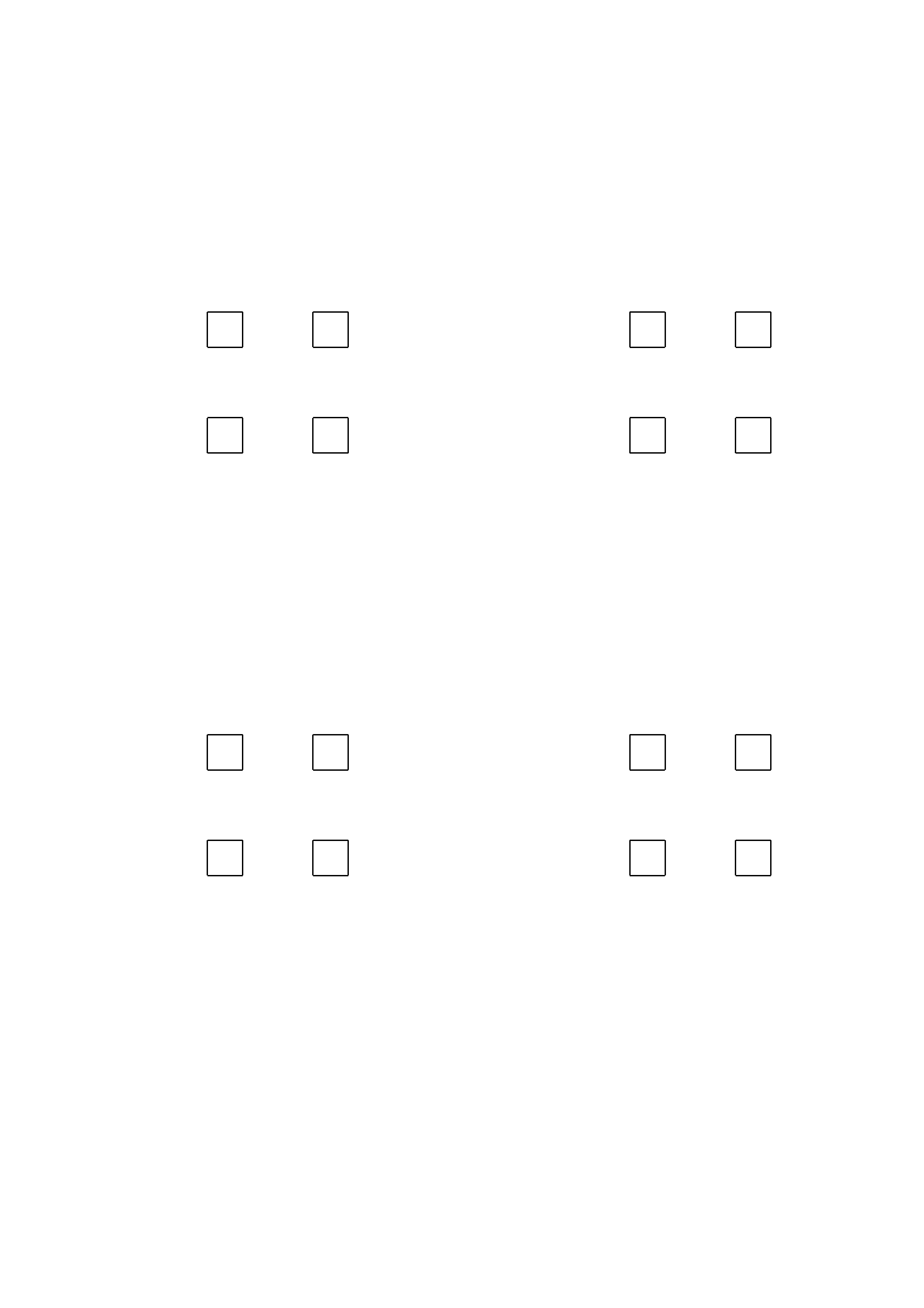}

\end{center}

\vspace{-1mm} {\scriptsize \hspace{3mm} $Q^0$ \hspace{31mm} $E_1$
\hspace{34mm} $E_2$}
\vspace{4mm}

\caption{The square $Q^0$ and the sets $E_1$ and $E_2$, which appear
in the first stages of the construction of the corner quarters Cantor set.
 \label{figcantor}}
\end{figure}

\vspace{3mm}

In the complex plane, consider now 
the Cauchy singular integral operator $\CC$, which is associated with the Cauchy kernel $1/z$.
It is well known that
$$\|\CC_{\HH^1\rest E_n}\|_{L^2(\HH^1\rest E_n)\to L^2(\HH^1\rest E_n)}\approx n^{1/2}.$$
See \cite{Mattila2}, for example.

Given a Borel measure $\sigma$ on $\C$ and $\lambda>0$, it is immediate to check that
$$\|\CC_{\lambda \sigma}\|_{L^2(\lambda\sigma)\to L^2(\lambda\sigma)} = \lambda\,\|\CC_{\sigma}\|_{L^2(\sigma)\to L^2(\sigma)}$$
and
$$\|\CC_{\lambda \sigma}\|_{L^2(\lambda\sigma)\to L^2(\sigma)} = \lambda^{1/2}\,\|\CC_{\sigma}\|_{L^2(\sigma)\to L^2(\sigma)}.$$
Also, these $L^2$ norms are invariant by translations. So, for any $z_n\in\C$ and $\lambda_n=n^{-1/2}$,
we consider the measures
$$\sigma_n=\HH^1\rest {(z_n+E_n)},\qquad \mu = \sum_{n\geq1} \lambda_n \,\sigma_n,
\qquad \nu = \sum_{n\geq1} \sigma_n.$$

Observe that if for each $n\geq1$,
$$\|\CC_{\lambda_n \sigma_n}\|_{L^2(\lambda_n\sigma_n)\to L^2(\lambda_n\sigma_n)}
\approx \lambda_n\,n^{1/2} =1.$$
Then, if the points $z_n$ are chosen far enough from each other, then the supports of the measures $\lambda_n\sigma_n$ will be very separated, and then
it easily follows that  
$\CC_\mu$ is bounded in $L^2(\mu)$. On the other hand,
$$\|\CC_{\mu}\|_{L^2(\mu)\to L^2(\nu)}
\geq \|\CC_{\lambda_n \sigma_n}\|_{L^2(\lambda_n\sigma_n)\to L^2(\sigma_n)}= \lambda_n^{1/2}n^{1/2}
=n^{1/4},$$
and thus $\CC_\mu$ is not bounded from $L^2(\mu)$ into $L^2(\nu)$.

\enlargethispage{2cm}

\end{document}